\documentclass[11pt]{amsart}

\usepackage{amsmath}
\usepackage{amssymb}
\usepackage{fancybox}
\usepackage{eucal}
\usepackage{amsthm}
\usepackage{amscd}
\usepackage{wasysym}
\usepackage{url}
\usepackage{MnSymbol} 


\setlength{\textwidth}{6in} \setlength{\textheight}{8in}
\setlength{\oddsidemargin}{0.1in}
\setlength{\evensidemargin}{\oddsidemargin}

\usepackage{amssymb,}
\usepackage[]{amsmath, amsthm, amsfonts,graphicx, amscd,}
\usepackage[all,cmtip]{xy}
\usepackage{setspace, xspace}

\usepackage{enumitem}

\usepackage{hyperref}

\newcommand{\ACVF}{\mathit{ACVF}}
\newcommand{\pCF}{\mathit{pCF}}

\DeclareMathSymbol{\mlq}{\mathord}{operators}{``}
\DeclareMathSymbol{\mrq}{\mathord}{operators}{`'}

\DeclareMathOperator{\cl}{cl}

\DeclareMathOperator{\acl}{acl}

\DeclareMathOperator{\divi}{div}

\newcommand{\DIV}[0]{\divi}

\newtheorem {theorem}{Theorem}[section]

\newtheorem{proposition}[theorem]{Proposition}

\newtheorem {lemma}[theorem]{Lemma}

\newtheorem{claim}{Claim}

\theoremstyle{remark}

\newtheorem{np*}{Non-Proof}

\theoremstyle{definition}
\newtheorem{definition}[theorem]{Definition}

\numberwithin{subcase}{case}

\newcommand{\mc}{\mathcal }

\begin{document}

\setlist[enumerate]{noitemsep, topsep=0pt}

\title{Computable Valued Fields}

\author[M. Harrison-Trainor]{Matthew Harrison-Trainor}
\address{Group in Logic and the Methodology of Science\\
University of California, Berkeley\\
 USA}
\email{matthew.h-t@berkeley.edu}
\urladdr{\href{http://www.math.berkeley.edu/~mattht/index.html}{www.math.berkeley.edu/$\sim$mattht}}

\thanks{The author was partially supported by the Berkeley Fellowship and NSERC grant PGSD3-454386-2014.}

\begin{abstract}
We investigate the computability-theoretic properties of valued fields, and in particular algebraically closed valued fields and $p$-adically closed valued fields. We give an effectiveness condition, related to Hensel's lemma, on a valued field which is necessary and sufficient to extend the valuation to any algebraic extension. We show that there is a computable formally $p$-adic field which does not embed into any computable $p$-adic closure, but we give an effectiveness condition on the divisibility relation in the value group which is sufficient to find such an embedding. By checking that algebraically closed valued fields and $p$-adically closed valued fields of infinite transcendence degree have the Mal{\textquotesingle}cev property, we show that they have computable dimension $\omega$.
\end{abstract}

\maketitle




\section{Introduction}

Recently there has been interest in studying, from the perspective of computability theory, various types of fields which arise in model theory. Marker and Miller \cite{MarkerMiller} studied the degree spectra of differentially closed fields, while Miller, Ovchinnikov, and Trushin \cite{MillerOvchinnikovTrushin14} have looked at generalizations of splitting algorithms for differential fields. Real closed fields have been studied by Calvert \cite{Calvert04}, Ocasio \cite{Ocasio}, Knight and Lange \cite{KnightLange13}, and Igusa, Knight, and Schweber \cite{IgusaKnightSchweber}. Generalizations to difference fields of Rabin's theorem on embeddings into algebraic closures have been studied by Melnikov, Miller, and the author \cite{HTMelnikovMiller15}. This article is a study of valued fields from the perspective of computable algebra. Variations of Rabin's theorem for valued fields were previously studied by Smith \cite{Smith}; some of our results extend those of that paper.

\begin{definition}
A valued field is a field $K$ together with a \textit{valuation} $v$ on $K$, that is, a map $K \to \Gamma \cup \{\infty\}$ from $K$ to an ordered abelian group $\Gamma$, such that
\begin{enumerate}
	\item $v(x) = \infty$ if and only if $x = 0$,
	\item $v(xy) = v(x) + v(y)$, and
	\item $v(x + y) \geq \min(v(x),v(y))$ (with equality if $v(x) \neq v(y)$).
\end{enumerate}
$\Gamma$ is called the \textit{value group}. We will always assume that the valuation is surjective.
\end{definition}
\noindent Standard examples of valued fields are the $p$-adic valuations on $\mathbb{Q}$ and their completions, the $p$-adic fields $\mathbb{Q}_p$.

In computable algebra, we consider computable presentations of algebraic structures. A computable valued field is a field whose underlying domain is a computable set $K \subseteq \omega$, equipped with computable functions $+_K$ and $\times_K$ giving the addition and multiplication operations, and with a computable valuation, i.e.\, a computable function $v \colon K \to \Gamma$ where $\Gamma$ is a computable group (a computable subset of $\omega$ with a computable group operation). There are a number of equivalent ways of presenting a valued field (see Section \ref{sec:lang}), but this method is most faithful to the classical definition of a valued field. Two computable valued fields may be classically isomorphic but not computably isomorphic.

One objective of computable algebra is to see which classical theorems hold in the effective setting, considering only computable objects. For example, it is a classical result that every valued field embeds into an algebraically closed valued field. The same is true in the effective setting: every computable valued field effectively embeds into a computable algebraically closed valued field. Similarly, every valued field has a Henselization, and every computable valued field effectively embeds into a computable presentation of its Henselization.

On the other hand, a slight variation of this does not hold. If we fix an embedding of a valued field $(K,v)$ into its algebraic closure $(\overline{K},w)$ with an extension of the valuation, the Henselization of $K$ in $\overline{K}$ is unique. In the effective setting, we assume that these fields $(K,v)$ and $(\overline{K},w)$ are computable and that the embedding is effective. In this case, we cannot compute the Henselization of $K$ inside of $\overline{K}$, even if we assume that $K$ has a splitting algorithm (an algorithm for finding the minimal polynomial over $K$ of an element of $\overline{K}$, or equivalently, for deciding which elements of $\overline{K}$ are actually in $K$).  Thus there is no effective criteria to decide, for a given $a \in \overline{K}$, and using only the minimal polynomial of $a$ over $K$ and the valuations of various elements, whether or not $a$ is in the Henselization of $K$.

\subsection{Extending Valuations}

In \cite{HTMelnikovMiller15} the author, together with Melnikov and Miller, considered the problem of extending an automorphism of a field $F$ to an automorphism of an algebraic extension $K$ of $F$ (with a fixed computable embedding of $F$ in $K$). In this article, we consider the related problem of extending a valuation of $F$ to a valuation of $K$. Smith \cite{Smith} proved several results along these lines, most importantly that every valued field embeds into an algebraically closed field with an extension of the valuation, but that one cannot do this with a fixed embedding into a fixed algebraically closed field.  Our main result is as follows:
\begin{theorem}
Let $(K,v)$ be a computable algebraic valued field. Then the following are equivalent:
\begin{enumerate}
	\item for every computable embedding $\iota \colon K \to L$ of $K$ into a field $L$ algebraic over $K$, there is a computable extension of $v$ to a computable valuation $w$ on $L$,
	\item the \textit{Hensel irreducibility set}
\begin{align*}
H_{K} := \{ f = x^n + a_{n-1} x^{n-1} + a_{n-2} x^{n-2} + \cdots + a_0 \in \mc{O}_K[x] : \\ 
 f \text{ is irreducible over } K\text{, } v(a_{n-1}) = 0 \text{, and } v(a_{n-2}),\ldots,v(a_0) > 0 \}
\end{align*}
	of $(K,v)$ is computable.
\end{enumerate}
\end{theorem}

\subsection{\texorpdfstring{$p$-adically Closed Fields}{p-adically Closed Fields}}

Among the most important examples of valued fields are the $p$-adics $\mathbb{Q}_p$. The theory of $p$-adically closed fields is the theory of $\mathbb{Q}_p$. Just as the theory of real closed fields is the model companion of the formally real fields, the theory of $p$-adically closed fields is the model companion of a class of fields called the formally $p$-adic fields. Classically, every formally $p$-adic embeds into a $p$-adic closure. The effective analogue is false:
\begin{theorem}
There is a computable formally $p$-adic field which does not embed into a computable $p$-adic closure.
\end{theorem}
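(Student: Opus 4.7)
The plan is to exploit the contrast between the value group of an arbitrary formally $p$-adic field and that of a $p$-adically closed field: the latter is a $\mathbb{Z}$-group, and in any computable presentation of a $\mathbb{Z}$-group the divisibility and congruence-mod-$n$ predicates are decidable (one locates the smallest positive element and reduces to integer arithmetic), whereas the value group of a formally $p$-adic field need only be an ordered abelian group with $v(p)$ as its smallest positive element, and its divisibility relation can be made arbitrarily non-computable. The strategy is to build a computable formally $p$-adic $K$ so that any computable embedding into a computable $p$-adic closure would transport a non-computable divisibility-type predicate on $v(K)$ into a computable $\mathbb{Z}$-group, a contradiction.

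First, I would construct a computable ordered abelian group $\Gamma$ containing $\mathbb{Z}$ as a convex subgroup with $1$ as smallest positive element, together with a computable sequence of generators $g_0, g_1, \ldots$ above $\mathbb{Z}$, arranged by a standard diagonal construction so that the residue of $g_i$ modulo $p$ (in the forced $\mathbb{Z}$-group extension of $\Gamma$ determined by $K$) encodes $\emptyset'$. Next, I would realize $\Gamma$ as $v(K^\times)$ for a computable formally $p$-adic field $K$: beginning from $\mathbb{Q}$ with its $p$-adic valuation, adjoin transcendentals $t_i$ with $v(t_i) = g_i$, and extend the valuation to $K = \mathbb{Q}(t_0, t_1, \ldots)$ by the Gauss-type rule that the valuation of a polynomial is the minimum of the valuations of its monomials. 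One then verifies that the residue field stays $\mathbb{F}_p$ and that $v(p) = 1$ remains the smallest positive element, so that $(K, v)$ is a computable formally $p$-adic field with value group exactly $\Gamma$.

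For the diagonalization, if $\iota \colon K \to L$ is a computable embedding into a computable $p$-adic closure $(L, w)$, then $w \circ \iota$ is a computable homomorphism from $K^\times$ into the computable $\mathbb{Z}$-group $w(L^\times)$, inside which residues modulo $p$ are computable. Reading off these residues for the images of the $t_i$ recovers the encoded non-computable sequence, yielding the contradiction. The main obstacle is subtle: different $\mathbb{Z}$-group extensions of $\Gamma$ can in general assign different residues mod $p$ to the same $g_i$, so the argument needs the residues actually realized in $w(L^\times)$ to be forced by the algebraic structure of $K$ rather than chosen freely by the closure. I would handle this by building into $K$ auxiliary algebraic witnesses (for instance, elements $a_i \in K^\times$ whose valuations $v(a_i)$ force a specific congruence $g_i \equiv r_i \pmod{p}$ in any extension) that pin down the required residues, so that the encoding survives passage to any $p$-adic closure. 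A secondary technical point is verifying that the Gauss-type construction of $(K, v)$ really is formally $p$-adic with $v(K^\times) = \Gamma$ and without accidentally trivializing the encoded non-computability.
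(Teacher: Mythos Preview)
Your high-level diagnosis is exactly right: the tension between the rigid divisibility structure of a $\mathbb{Z}$-group and the flexibility available in the value group of a merely formally $p$-adic field is the engine of the proof, and your observation that congruences modulo $n$ are decidable in any computable $\mathbb{Z}$-group is correct and is what makes the closure side of the argument work.

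The gap is in the encoding step. You propose to encode $\emptyset'$ in the residues $g_i \bmod p$, and you correctly flag the obstacle that different $\mathbb{Z}$-group extensions of $\Gamma$ may assign different residues to a given $g_i$; your fix is to add algebraic witnesses $a_i$ forcing $g_i \equiv r_i \pmod p$. But this creates a circularity you do not resolve: to force $r_i = 1$ exactly when $i \in \emptyset'$ and $r_i = 0$ otherwise, you must commit a witness for $r_i = 0$ on the $\Pi^0_1$ side, which requires knowing the complement of $\emptyset'$. If instead you only add witnesses when $i$ enters $\emptyset'$ and leave the other $g_i$ unconstrained, then a $p$-adic closure is free to assign residue $1$ to every $g_i$, and reading off residues in the closure recovers nothing. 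In short, the residues are an invariant of the \emph{closure}, not of $K$, unless $K$ already forces them---and $K$ cannot computably force a $\emptyset'$-pattern.

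The paper's proof avoids this by \emph{direct diagonalization} rather than coding a fixed set. It begins with a single transcendental $t$ over $\mathbb{Q}$ with $v(t) > \mathbb{Z}$, and for the $i$th candidate computable embedding $f_i$ into a putative computable $p$-adic closure $(K_i,v_i)$ it waits until $K_i$ reveals an element $a$ with $q_i \cdot v_i(a) = v_i(p^m f_i(t))$ for some $0 \le m < q_i$ (such $a$ must eventually appear, since $v_i(K_i)$ is a $\mathbb{Z}$-group). At that moment it adjoins to $E$ a $q_i$th root $b$ of $p^{m+1} t$, forcing $q_i \cdot v(b) = v(t) + m + 1$; in $K_i$ this gives $q_i \cdot v_i(f_i(b)/a) = 1$, so $q_i$ divides $1$ in $v_i(K_i)$, contradicting formal $p$-adicity. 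Using distinct primes $q_i$ for distinct requirements keeps the successive extensions of coprime degree, so the ramification and value-group bookkeeping goes through and $E$ remains formally $p$-adic throughout. The point is that one never needs to know in advance which residue to force: one waits for the opponent to commit and then spoils that specific commitment. Your ``auxiliary algebraic witnesses'' are precisely the spoiling mechanism; what is missing from your plan is replacing the static $\emptyset'$-coding by this dynamic wait-and-react strategy.
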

\noindent The issue is that we can construct a formally $p$-adic field in which the divisibility relation on the value group is not computable. If we have an algorithm to compute the divisibility relation on the value group of a formally $p$-adic field, then we can effectively embed that field into a computable $p$-adic closure.
\begin{theorem}
Let $(K,v)$ be a computable formally $p$-adic valued field with value group $\Gamma$. Suppose that we can compute, for each $\gamma \in \Gamma$ and $k \in \mathbb{N}$, whether $\gamma$ is divisible by $k$. Then there is a computable embedding of $K$ into a computable $p$-adic closure $(L,w)$.
\end{theorem}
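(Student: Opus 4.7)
The strategy is to build $L$ as the union of a computable chain of algebraic extensions $K = K_0 \subseteq K_1 \subseteq K_2 \subseteq \cdots$. At each stage I adjoin finitely many algebraic elements to $K_i$, extending the valuation computably by invoking Theorem 1, and arranging via a fair enumeration that every axiom of a $p$-adically closed field is eventually met in the limit.

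The backbone is Theorem 1: if each $K_i$ has computable Hensel irreducibility set $H_{K_i}$, then any computable algebraic extension of $K_i$ admits a computable extension of the valuation. The first task is therefore to show that computable divisibility on the value group of a formally $p$-adic field yields computable $H_K$, and that this property propagates through the algebraic extensions used in the construction. For a polynomial $f = x^n + a_{n-1}x^{n-1} + \cdots + a_0$ in Hensel form, the Newton polygon has a slope-$0$ segment of length one together with remaining segments of strictly negative slope, so in any extension where $v$ extends, $f$ has exactly one root of valuation $0$ and $n-1$ roots of strictly positive valuation, whose valuations in $\Gamma \otimes \mathbb{Q}$ are determined by the slopes. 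Any proper factorization of $f$ over $K$ must respect the induced partition of roots, and the valuations of the coefficients of a putative factor are then constrained to specific rational combinations of the $v(a_i)$; using computable divisibility in $\Gamma$, I can decide which of these combinations actually lie in $\Gamma$ and hence which shapes of factorization are a priori possible. The remaining task, checking whether any such candidate factor is actually realized in $K[x]$, reduces to searching (via the computable field structure of $K$) for an element with a specified valuation satisfying a computable system of algebraic conditions, yielding a decision procedure for $H_K$.

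Granting computable $H_{K_i}$ at each stage, the main construction alternates among: (a) picking the least $f \in H_{K_i}$ not yet handled and adjoining a root to drive the limit toward Henselianity; (b) adjoining roots to enlarge the value group so that in the limit it becomes a $\mathbb{Z}$-group of the appropriate shape, using the computable divisibility to decide which roots are still needed; and (c) adjusting the residue field toward $\mathbb{F}_p$, which in the formally $p$-adic setting is essentially automatic. After each step Theorem 1 provides a computable extension of the valuation on $K_{i+1}$, and a parallel bookkeeping argument shows that the divisibility predicate on the new value group can be maintained computably, since the value group of a finite algebraic extension is controlled by $\Gamma$ and the ramification data of the minimal polynomials of the adjoined elements. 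The direct limit $L$ is a computable algebraic extension of $K$ satisfying all axioms of a $p$-adically closed field; by uniqueness of the $p$-adic closure up to $K$-isomorphism, $L$ is a $p$-adic closure and the inclusion $K \hookrightarrow L$ is the required computable embedding.

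The main obstacle is the first step: extracting decidability of $H_K$ from computable divisibility on $\Gamma$. Irreducibility over an arbitrary computable field is only $\Pi^0_1$, so one cannot merely enumerate factorizations; the argument must exploit the rigidity imposed by the Newton polygon of a Hensel-form polynomial to reduce to finitely many candidate factorizations, each of which can then be tested using the valuation-theoretic data. A secondary difficulty is propagating the computability of the divisibility predicate through the chain of extensions, which requires showing that the divisibility structure of a finite extension's value group is uniformly computable from $\Gamma$ together with the minimal polynomials of the newly adjoined elements.
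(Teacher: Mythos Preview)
Your proposal has a genuine gap at exactly the point you flag as the ``main obstacle'': deducing decidability of $H_K$ from computable divisibility on $\Gamma$. The Newton-polygon argument you sketch constrains the \emph{valuations} of the coefficients of any putative factor, but it does not bound the search for such a factor in $K[x]$. Knowing that a hypothetical factor $g$ must have coefficients of certain valuations still leaves infinitely many candidates in $K$; your ``search for an element with a specified valuation satisfying a computable system of algebraic conditions'' is only a $\Sigma^0_1$ procedure, so it confirms reducibility but never certifies irreducibility. You have not shown how to rule out the existence of a factor, and there is no obvious finiteness to exploit---the field $K$ need not have a splitting algorithm, and the Newton polygon only tells you about valuations, not about which elements of $K$ realise them.

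The paper sidesteps this entirely by never invoking the Hensel irreducibility set. Instead it begins with the \emph{Henselization} $(K,v)^h$, which is computable by an earlier proposition, and maintains Henselianity throughout: at each step it adjoins a single $q$th root (to force the value group toward a $\mathbb{Z}$-group) and then passes to the Henselization again. Because every $K_s$ is Henselian, the valuation extends \emph{uniquely} to any algebraic extension, so there is no extension problem to solve and Theorem~1 is irrelevant. The propagation of computable divisibility---your ``secondary difficulty''---is handled by an elementary group-theoretic lemma (the paper's Lemma~4.1) about $\DIV$ of $G\langle a\rangle$ with $na\in G$, applied to the value group after adjoining a $q$th root. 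The verification that each $K_{s+1}$ remains formally $p$-adic (residue field still $\mathbb{F}_p$, $v(p)$ still minimal) is a direct computation with the fundamental inequality. Finally, note a factual slip: $p$-adic closures are \emph{not} unique up to $K$-isomorphism (the paper states this explicitly), so your closing appeal to uniqueness is misplaced; fortunately it is also unnecessary, since by definition any algebraic $p$-adically closed extension of $K$ is a $p$-adic closure.
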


\subsection{Copies with Computable and Non-Computable Transcendence Bases}

Many algebraic structures admit a notion of independence, such as algebraic independence in field, linear independence in vectors spaces, $\mathbb{Z}$-linear independence in abelian groups, and differential independence in differential fields. In the 1960's, Mal{\textquotesingle}cev noticed that there are two non-computably-isomorphic computable presentations of the infinite-dimensional $\mathbb{Q}$-vector space, one with a computable basis, and the other with no computable basis, and that the two were $\Delta^0_2$-isomorphic. Many other structures have been found to have the same property, such as algebraically closed fields, torsion-free abelian groups \cite{Nurtazin74b,Dobritsa83,Goncharov82}, Archimedean ordered abelian groups \cite{GoncharovLemppSolomon03}, differentially closed fields, real closed fields, and difference closed fields \cite{HTMelnikovMontalban15}. In \cite{HTMelnikovMontalban15}, the author together with Melnikov and Montalb\'an formally characterized this phenomenon (which they named the Mal{\textquotesingle}cev property) using the notion of a r.i.c.e.\ pregeometry, and presented a metatheorem unifying all of these examples. Here we will apply the metatheorem to algebraically closed valued fields and $p$-adically closed valued fields.
\begin{theorem}
Every computable algebraically closed valued field or $p$-adically closed valued field $K$ of infinite transcendence degree has a computable copy $G \cong_{\Delta^0_2} K$ with a computable transcendence base and a computable copy $B \cong_{\Delta^0_2} K$ with no computable transcendence base.
\end{theorem}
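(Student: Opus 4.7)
My plan is to invoke the metatheorem of Harrison-Trainor, Melnikov, and Montalbán from \cite{HTMelnikovMontalban15}, which asserts that a computable structure of infinite dimension carrying a r.i.c.e.\ pregeometry with effective homogeneity over independent tuples admits both a computable copy with a computable basis and a computable copy with no computable basis, each $\Delta^0_2$-isomorphic to the original. In both \ACVF{} and \pCF{} the natural pregeometry is the transcendence pregeometry on the field sort: for a finite $A$ in the field, $\cl(A)$ is the algebraic closure of the prime field adjoined with $A$, intersected with the underlying field. Because the field sort is definably closed in itself in both theories, the model-theoretic algebraic closure of field elements agrees with $\cl$, so transcendence bases in the algebraic sense are bases in the pregeometry sense, and infinite transcendence degree of $K$ gives infinite dimension.

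Verifying the r.i.c.e.\ condition is routine: $a \in \cl(b_1,\ldots,b_n)$ iff some nonzero polynomial over the prime field with nonzero leading coefficient in $x$ vanishes at $(a,b_1,\ldots,b_n)$, which is a $\Sigma^0_1$ condition uniformly in the presentation, while the pregeometry axioms are inherited from field-theoretic algebraic closure and do not involve the valuation. The effective-homogeneity hypothesis decomposes into two parts. First, any two independent tuples of equal length realize the same quantifier-free type; this follows from quantifier elimination of \ACVF{} in the Haskell--Hrushovski--Macpherson three-sorted language and of \pCF{} in Macintyre's language. Second, one must be able to adjoin computably a new transcendental over an independent tuple together with all the valuation and residue data it carries: for \ACVF{} the value group is divisible and the residue field algebraically closed, so any prescribed choice of valuation and residue for the new element succeeds; for \pCF{} the $p$-adic closure of a formally $p$-adic field with given value-group and residue data is unique up to isomorphism fixing the base, and may be built computably.

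The principal obstacle, compared with the plain algebraically closed field case, lies in the construction of the copy $B$ with no computable transcendence base: the standard pregeometry trick codes a non-computable set into the decision of which elements are declared transcendental, but here each newly adjoined transcendental also needs a valuation and residue consistent with both the \ACVF{}/\pCF{} axioms and an eventual $\Delta^0_2$-isomorphism back to $K$. I would handle this by prescribing the valuation and residue of each new transcendental to match those of a fixed element of a computable transcendence base of $K$ itself, available because $K$ is given in a language that includes the value group. Once this data is consistently arranged at every stage, all hypotheses of the metatheorem of \cite{HTMelnikovMontalban15} are in place and it yields the desired copies $G$ and $B$.
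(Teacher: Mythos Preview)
Your proposal invokes the correct metatheorem, but the verification you sketch rests on a false claim. You assert that ``any two independent tuples of equal length realize the same quantifier-free type''; this is simply not true in either $\ACVF$ or $\pCF$. Two elements $a,b$ transcendental over the prime field can have $v(a)>0$ and $v(b)<0$, so already the atomic formula $1\mid x$ (i.e.\ $v(x)\geq 0$) separates them. Quantifier elimination does not help here: the quantifier-free type of an independent tuple records its full valuation and residue data, and there are many possibilities. This is exactly what distinguishes the valued-field case from the plain $\mathrm{ACF}$ case, where independent tuples genuinely are indiscernible.

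The actual hypothesis in the metatheorem (Condition~G) is the weaker ``locally indistinguishable'' property: given independent tuples $\bar{u}$ and $\bar{v}$ over $\bar{c}$ and an existential formula $\phi$ with $\phi(\bar{c},\bar{u})$, one must produce $\bar{w}$ independent over $\bar{c}$, satisfying $\phi(\bar{c},\bar{w})$, with $w_i\in\cl(\bar{c},v_1,\ldots,v_i)$. One does not move $\bar{v}$ itself into the definable set; one finds a tuple interalgebraic with $\bar{v}$ that lands there. The paper establishes this for $\ACVF$ by observing that the relevant definable sets are open in the valuation topology and then manufacturing, inside any open ball with algebraic centre, a point of the form $b_iv_i-(b_i-1)u_i$ interalgebraic with the given $b_i$. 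For $\pCF$ the paper uses cell decomposition: an independent tuple lies in a $(1,\ldots,1)$-cell, and a dedicated lemma (Lemma~\ref{lem:inter-alg}) shows how to push any transcendental into such a cell up to interalgebraicity, using definable Skolem functions to locate an algebraic anchor point. None of this is addressed by your sketch.

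You also omit Condition~B (density of dependent elements), which the paper checks via model completeness for $\ACVF$ and definable Skolem functions for $\pCF$; and the paper separately argues that the independence diagram is uniformly c.e., which needs a characterisation of which definable sets contain an independent tuple. Your final paragraph about prescribing valuations of new transcendentals to match a fixed basis of $K$ is a gesture toward a direct construction rather than a verification of the metatheorem's hypotheses, and as stated it does not address the diagonalisation needed to kill all computable bases in $B$.
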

\noindent Note that by a theorem of Goncharov \cite{Goncharov82}, every such structure has computable dimension $\omega$.

\section{Preliminaries}

\subsection{Splitting algorithms}

Recall that the \textit{splitting set} $S_{F}$ of ${F}$ is the set of all polynomials $p \in {F}[X]$ which are reducible over ${F}$. The splitting set of a field is not necessarily computable (see \cite[Lemma 7]{Miller08}), but it is always c.e. If the splitting set of ${F}$ is computable, then we say that ${F}$ has a \textit{splitting algorithm}. Finite fields and algebraically closed fields trivially have splitting algorithms. Kronecker \cite{Kronecker82} showed that $\mathbb{Q}$ has a splitting algorithm, and also that many other field extensions also have splitting algorithms:

\begin{theorem}[Kronecker \cite{Kronecker82}; see also \cite{vanderWaerden70}]\label{Kronecker}
The field $\mathbb{Q}$ has a splitting algorithm. If a computable field ${F}$ has a splitting algorithm, and $a$ is transcendental over ${F}$ (or separable and algebraic over ${F}$), then ${F}(a)$ has a splitting algorithm. Moreover, in the case that $a$ is algebraic over ${F}$, the splitting algorithm for ${F}(a)$ can be found uniformly in the splitting algorithm for ${F}$ and the minimal polynomial of $a$ over ${F}$. If $a$ is transcendental over ${F}$, then the splitting algorithm can be found uniformly in the splitting algorithm for ${F}$. 
\end{theorem}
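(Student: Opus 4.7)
The plan is to handle the three cases of the theorem in sequence, using Gauss's lemma and classical Kronecker-style reductions adapted to ensure computability. Throughout, I will use the standard fact that for a computable field $F$, a splitting algorithm yields a full factorization algorithm: one enumerates candidate divisors of smaller degree and uses the splitting algorithm to decide when the search for a nontrivial factor has succeeded.

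For $\mathbb{Q}$, by Gauss's lemma it suffices to decide reducibility of primitive polynomials in $\mathbb{Z}[x]$. Given such $f$ of degree $n$, any nontrivial factor has degree at most $k := \lfloor n/2 \rfloor$. I would evaluate $f$ at $k+1$ distinct integers $x_0,\ldots,x_k$; any factor $g$ satisfies $g(x_i) \mid f(x_i)$, leaving only finitely many tuples of candidate values for $(g(x_0),\ldots,g(x_k))$. For each such tuple, Lagrange interpolation produces the unique polynomial of degree $\leq k$ realizing it, and one tests whether this candidate has integer coefficients and divides $f$ in $\mathbb{Z}[x]$.

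For the transcendental case, given $f \in F(a)[x]$, I clear denominators to put $f$ in $F[a,x]$; Gauss's lemma (applied to the UFD $F[a]$) implies that $f$ is reducible in $F(a)[x]$ iff its primitive part has a nontrivial factorization in $F[a,x]$. If $f$ has $x$-degree $n$ and $a$-degree $m$, any factor $g$ in $F[a,x]$ satisfies $\deg_x g \leq n$ and $\deg_a g \leq m$. I would apply Kronecker's substitution $a \mapsto x^{n+1}$ to produce $\tilde f(x) := f(x^{n+1},x) \in F[x]$. Since $n+1 > n$, the substitution is injective on monomials $a^i x^j$ with $j \leq n$, so a factorization $f = gh$ in $F[a,x]$ yields a factorization $\tilde f = \tilde g \tilde h$ in $F[x]$ from which $g$ and $h$ are recovered by writing exponents in base $n+1$. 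Using the factorization algorithm for $F$, enumerate factorizations of $\tilde f$, lift each factor back to $F[a,x]$, and test divisibility against $f$. Uniformity in the splitting algorithm for $F$ is immediate.

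For the algebraic separable case, let $p(x) \in F[x]$ be the minimal polynomial of $a$, and for $f \in F(a)[y]$ lift $f$ to $\tilde f \in F[x,y]$ and form the norm $N(f)(y) := \operatorname{Res}_x\bigl(p(x), \tilde f(x,y)\bigr) \in F[y]$. By a classical theorem using separability, if $f$ is irreducible in $F(a)[y]$ then $N(f)$ is a power of an irreducible polynomial in $F[y]$. Following Trager, for squarefree $f$ and all but finitely many $c \in \mathbb{Z}$ the shifted norm $N(f(y-ca))$ is squarefree in $F[y]$, and its irreducible factors lift via gcds with $f(y-ca)$ in $F(a)[y]$ to the irreducible factors of $f(y-ca)$. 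Algorithmically, I would first replace $f$ by its squarefree part $f/\gcd(f,f')$, then search over $c = 0,1,2,\ldots$, factor $N(f(y-ca))$ in $F[y]$ using the splitting algorithm for $F$, and halt at the first $c$ for which the norm is squarefree; the factorization of $f$ can then be read off. The main obstacle is justifying that this search terminates computably and that squarefreeness of the norm really does allow recovery of the factorization of $f$; here separability is essential, since inseparable extensions break the norm-of-irreducibles-is-prime-power statement. Everything here is uniform in $p$ and the splitting algorithm for $F$.
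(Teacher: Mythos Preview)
The paper does not give a proof of this theorem; it is stated as a classical result with citations to Kronecker and van der Waerden, and is used as a black box throughout. So there is no in-paper argument to compare against.

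That said, your sketch is essentially the classical proof one finds in those references. The integer-evaluation method for $\mathbb{Q}$ is exactly Kronecker's original argument. The Kronecker substitution $a\mapsto x^{n+1}$ for the transcendental case is standard and your degree bounds and injectivity-on-monomials observation are the correct ingredients; the only thing to add is that after factoring $\tilde f$ completely in $F[x]$ one must run over all \emph{subsets} of the irreducible factors, since a genuine bivariate factor may correspond to a product of several univariate irreducibles. For the separable algebraic case, the norm/resultant method you describe (in Trager's formulation) is a correct route and is close in spirit to van der Waerden's treatment.

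One caveat: your squarefree reduction $f\mapsto f/\gcd(f,f')$ and the claim that $N(f(y-ca))$ is squarefree for all but finitely many $c\in\mathbb{Z}$ both silently assume that $f$ is separable as a polynomial, not merely squarefree over $F(a)$. In characteristic~$p$ an inseparable irreducible factor has $f'=0$ and the norm is never squarefree; the standard fix is to first write $f(y)=g(y^{p^e})$ with $g'\neq 0$ and argue separately. The theorem as stated carries no characteristic hypothesis, so a fully general proof needs this extra step. For the purposes of this paper it does not matter, since every invocation of the theorem is over finite extensions of $\mathbb{Q}$.
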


\noindent Given a field ${F}$ with a splitting algorithm and an element $a$ which is either transcendental over ${F}$, or separable and algebraic over ${F}$, we know that ${F}(a)$ has a splitting algorithm. However, the algorithm depends on whether $a$ is transcendental or algebraic. To find a splitting algorithm uniformly, we must know which is the case.

Rabin \cite{Rabin60} showed that every computable field ${F}$ has a computable algebraic closure $\overline{{F}}$, and moreover there is a computable embedding $\imath \colon {F} \to \overline{{F}}$. We call such an embedding a \textit{Rabin embedding}. Moreover, he characterized the image of ${F}$ under this embedding:

\begin{theorem}[Rabin \cite{Rabin60}]
Let ${F}$ be a computable field. Then there is a computable algebraically closed field $\overline{{F}}$ and a computable field embedding $\imath \colon {F} \to \overline{{F}}$ such that $\overline{{F}}$ is algebraic over $\imath({F})$. Moreover, for any such $\overline{{F}}$ and $\imath$, the image $\imath({F})$ of ${F}$ in $\overline{{F}}$ is Turing equivalent to the splitting set of ${F}$. 
\end{theorem}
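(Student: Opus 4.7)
My plan is to split the theorem into two parts: the existence of $\overline{F}$ with a computable embedding $\imath$, and the Turing equivalence $\imath(F) \equiv_T S_F$ for any such pair.

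For the existence, I would build $\overline{F}$ as an increasing union of finite extensions $F = F_0 \subseteq F_1 \subseteq \cdots$, arranging at stage $n$ that the $n$-th monic polynomial $p_n$ in a fixed enumeration of $F[x]$ splits over $F_{n+1}$; the union is then algebraic over $F$ and contains a root of every polynomial of $F[x]$, hence is an algebraic closure. The technical obstacle here is that without a splitting algorithm over $F$ (or over $F_n$), one cannot directly identify the irreducible factors of $p_n$ and adjoin roots by quotienting $F_n[x]/(p_n)$ by a maximal ideal. The standard workaround is to formally adjoin $\deg(p_n)$ new symbols $\alpha_1, \ldots, \alpha_{\deg p_n}$ satisfying $p_n(x) = \prod_i (x-\alpha_i)$, obtaining a ring that is a product of field extensions, and then identify symbols representing the same algebraic element by dovetailing through the c.e.\ list of polynomial relations that hold between them. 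This is the most delicate step of the construction, and I would invoke Rabin's original (or a textbook) presentation rather than reprove it.

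For $S_F \leq_T \imath(F)$: given oracle access to $\imath(F)$ and a monic $p \in F[x]$ of degree $n$, I decide reducibility as follows. Since $\overline{F}$ is computable and algebraically closed, one can effectively search $\overline{F}$ to find all roots $\alpha_1,\ldots,\alpha_n$ of $\imath(p)$ (by finding one root at a time, dividing out $(x-\alpha)$, and iterating). Then $p$ is reducible over $F$ exactly when there is a nonempty proper subset $S \subsetneq \{1,\ldots,n\}$ such that every elementary symmetric polynomial in $\{\alpha_i : i \in S\}$ lies in $\imath(F)$, giving a nontrivial factor $\prod_{i \in S}(x-\alpha_i) \in F[x]$. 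Each of the finitely many subsets requires only finitely many membership queries to the $\imath(F)$-oracle, so the test is effective.

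For $\imath(F) \leq_T S_F$: given an oracle for $S_F$ and an element $a \in \overline{F}$, I would decide membership in $\imath(F)$ by computing the minimal polynomial of $a$ over $F$. Enumerate the monic polynomials $p \in F[x]$ in order of increasing degree; for each one, use $S_F$ to test irreducibility and use the computable field operations of $\overline{F}$ to test whether $\imath(p)(a) = 0$. Because $\overline{F}$ is algebraic over $\imath(F)$, exactly one irreducible $p$ satisfies this, namely the minimal polynomial $m_a$; then $a \in \imath(F)$ if and only if $\deg m_a = 1$, in which case $m_a = x - b$ identifies the preimage $b = \imath^{-1}(a)$. The main obstacle, as noted, is the existence half; both Turing reductions are relatively routine once the correct framework is set up.
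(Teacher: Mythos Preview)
The paper does not prove this statement; it is quoted as a classical result of Rabin \cite{Rabin60} in the preliminaries and used as a black box thereafter. So there is no ``paper's own proof'' to compare against, and your task reduces to whether your sketch stands on its own.

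It does, and it is essentially Rabin's original argument. Two small remarks. First, in the reduction $\imath(F) \leq_T S_F$, the phrase ``enumerate the monic polynomials $p \in F[x]$ in order of increasing degree'' is awkward when $F$ is infinite, since there are infinitely many polynomials of each degree; but this is harmless, because the minimal polynomial of $a$ is the \emph{unique} monic irreducible polynomial over $F$ vanishing at $a$, so any effective enumeration of $F[x]$ (dovetailed) will locate it. Second, in the reduction $S_F \leq_T \imath(F)$, when $p$ has repeated roots you should speak of sub-multisets of the roots rather than subsets, but the test is unchanged. The existence half is, as you say, the nontrivial part, and your description (formally adjoin a full set of roots, then quotient by the c.e.\ congruence of provable equalities) is the standard workaround for the absence of a splitting algorithm; citing Rabin for the details is appropriate here given that the paper itself treats the result as background.
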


\subsection{Valued fields}

The \textit{valuation ring} $\mc{O}_{K,v}$ of $K$ is the subring consisting of all elements $a$ with $v(a) \geq 0$. $\mc{O}_{K,v}$ is a local ring with maximal ideal $\mathfrak{m}_{K,v} = \{ x : v(x) > 0 \}$. The \textit{residue field} $k_{K,v}$ is the quotient $\mc{O}_{K,v} / \mathfrak{m}_{K,v}$. When the valuation $v$ is clear from the context, we write $\mc{O}_K$, $\mathfrak{m}_K$, and $k_K$. Given $a \in \mc{O}_K$, we denote by $\bar{a}$ its image in the residue field. For a comprehensive reference on valued fields, see \cite{EnglerPrestel05}.

\begin{definition}\label{def:Hensel}
A valued field $(K,v)$ is \textit{Henselian} if it satisfies one of the following equivalent properties (see \cite[Theorem 4.1.3]{EnglerPrestel05}):
\begin{enumerate}
	\item $v$ has a unique extension to every algebraic extension $L$ of $K$,
	\item given $f \in \mc{O}_K[x]$ and $a \in \mc{O}_K$ such that $v(f(a)) > 2v(f'(a))$, there is a unique $b \in \mc{O}$ such that $f(b) = 0$ and $v(a - b) > v(f'(a))$,
	\item given $f \in \mc{O}_K[x]$ and $a \in \mc{O}_K$ such that $\bar{f}(\bar{a}) = 0$ and $\bar{f}'(\bar{a}) \neq 0$, there is a $b \in \mc{O}_K$ with $f(b) = 0$ and $\bar{a} = \bar{b}$,
	\item every polynomial $x^n + a_{n-1} x^{n-1} + a_{n-2}x^{n-2} + \cdots + a_0 \in \mc{O}_K[x]$ with $v(a_{n-1}) = 0$ and $v(a_{n-2}),\ldots,v(a_0) > 0$ has a solution in $K$.
\end{enumerate}
\end{definition}
Every valued field has a Henselization, that is, a minimal Henselian field into which it embeds. The Henselization of a field is algebraic over that field, and every Henselization of a given field is isomorphic. Moreover, after fixing an embedding of the field into its algebraic closure, the Henselization is unique. We denote by $K^h$ the Henselization of a field $K$.

If $(L,w)$ is a valued field extension of $(K,v)$, then we may view the value group $\Gamma_K$ as a subgroup of $\Gamma_L$ and the residue field $k_K$ as a subfield of $k_L$. We call $e(w / v) = [\Gamma_L : \Gamma_K]$ the \textit{ramification index} of the extension and $f(w / v) = [k_L : k_K]$ the \textit{residue degree} of the extension. An extension is called \textit{immediate} if the ramification index and the residue degree are both $1$. If we consider a field $L$ which is an extension (as a field) of the valued field $(K,v)$, we can ask about extensions of $v$ to $L$. There may in general be many possible extensions, but the number is limited by the degree $[L:K]$ of the extension according to the following theorem.

\begin{theorem}[Theorems 3.3.4 and 3.3.5 of \cite{EnglerPrestel05}]\label{thm:fund-ineq}
Let $L/K$ be a finite extension of fields and $v$ a valuation on $K$. Let $w_1,\ldots,w_n$ be the distinct extensions of $v$ to $L$. Then
\[ \sum_{i=1}^n e(w_i/v) f(w_i/v) \leq [ L : K ].\]
If the extension $L / K$ is separable and the value group of $K$ is $\mathbb{Z}$, then we have equality.
\end{theorem}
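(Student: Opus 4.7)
The plan is to prove the two assertions separately. For the inequality, the strategy is to exhibit $N := \sum_{i=1}^n e(w_i/v) f(w_i/v)$ many $K$-linearly independent elements of $L$. For each $i$, I would pick $y_{i,1},\ldots, y_{i,e(w_i/v)} \in L^\times$ with the values $w_i(y_{i,j})$ lying in pairwise distinct cosets of $\Gamma_K$ in $\Gamma_L$, and pick $z_{i,1},\ldots,z_{i,f(w_i/v)} \in \mc{O}_{L,w_i}$ whose residues form a $k_v$-linearly independent set in $k_{L,w_i}$. The claim to establish is that all the products $y_{i,j} z_{i,k}$ are $K$-linearly independent.

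I would first dispose of the case $n=1$ by the standard ultrametric argument: in a relation $\sum_{j,k} c_{jk} y_j z_k = 0$ with $c_{jk} \in K$, rewrite it as $\sum_j y_j d_j$ with $d_j := \sum_k c_{jk} z_k$; if $d_j \neq 0$, dividing through by whichever $c_{jk}$ has smallest $v$-value and passing to residues uses $k_v$-linear independence of the $\bar z_k$ to force $w(d_j) = \min_k v(c_{jk}) \in \Gamma_K$, so the nonzero terms $y_j d_j$ have $w$-values in pairwise distinct cosets of $\Gamma_K$ and the ultrametric inequality rules out cancellation. For general $n$ I would invoke the weak approximation theorem for the pairwise inequivalent valuations $w_1,\ldots,w_n$ on $L$: replace each $y_{i,j}$ and $z_{i,k}$ by elements that agree with them up to very high $w_i$-precision and have very large $w_{i'}$-valuation for $i' \neq i$. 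Given a hypothetical dependence $\sum_{i,j,k} c_{ijk} y_{i,j} z_{i,k} = 0$, picking $i_0$ so that the $w_{i_0}$-value of the partial sum indexed by $i_0$ is smallest, the contributions from $i \neq i_0$ have strictly larger $w_{i_0}$-value, so the single-extension argument applied at $i_0$ forces the $c_{i_0,j,k}$ to vanish; iterating finishes the proof of the inequality.

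For the equality, assume $L/K$ is separable and $\Gamma_K = \m Z$. Let $\hat K$ be the $v$-adic completion of $K$. Separability guarantees that $L \otimes_K \hat K$ is reduced, and combined with the density of $L$ in each $w_i$-completion $\hat L_i$ and the approximation theorem one obtains an isomorphism of $\hat K$-algebras
\[ L \otimes_K \hat K \;\cong\; \prod_{i=1}^n \hat L_i, \]
so $[L:K] = \sum_i [\hat L_i : \hat K]$. It then suffices to show $[\hat L_i : \hat K] = e(w_i/v) f(w_i/v)$. Fixing a uniformizer $\pi_i$ of $\hat L_i$ and lifts $\omega_{i,1},\ldots, \omega_{i,f(w_i/v)}$ of a $k_v$-basis of the residue field (the ramification index and residue degree are unchanged under completion), completeness of $\hat K$ together with discreteness of the valuation lets any element of $\hat L_i$ be expressed as a convergent series in the $\pi_i^{\,j}\omega_{i,k}$ with coefficients in $\hat K$, so these $e(w_i/v)f(w_i/v)$ elements span; the inequality part has already shown them to be $\hat K$-linearly independent.

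The main obstacle is the decomposition $L \otimes_K \hat K \cong \prod_i \hat L_i$, which is where separability is essential (to kill nilpotents in the tensor product) and where one needs the approximation theorem plus density to identify the factors with the local completions. The series-expansion step for each $\hat L_i / \hat K$ is standard but relies on both the completeness of $\hat K$ and the discreteness of $\Gamma_K$; dropping either hypothesis leads to strict inequality in general.
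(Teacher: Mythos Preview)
The paper does not prove this statement at all: it is quoted as background (Theorems~3.3.4 and~3.3.5 of \cite{EnglerPrestel05}) and used as a black box, so there is no in-paper argument to compare against. Your outline is essentially the standard textbook route found in Engler--Prestel and elsewhere.

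On its own merits your sketch is sound, with one step that needs tightening. In the multi-extension case of the inequality you write ``picking $i_0$ so that the $w_{i_0}$-value of the partial sum indexed by $i_0$ is smallest, the contributions from $i\neq i_0$ have strictly larger $w_{i_0}$-value.'' As stated this does not follow: the approximation threshold for the modified $y_{i,j},z_{i,k}$ is fixed \emph{before} the coefficients $c_{ijk}$ appear, and nothing prevents some $v(c_{ijk})$ from being very negative, so the terms with $i\neq i_0$ need not have large $w_{i_0}$-value. The standard fix is to first scale the relation so that $\min_{i,j,k} v(c_{ijk})=0$, take $i_0$ to be an index where this minimum is attained, and choose the approximation threshold larger than the finitely many values $w_i(y_{i,j}z_{i,k})$; then the $n=1$ argument bounds $w_{i_0}(S_{i_0})$ above independently of the coefficients, while every term with $i\neq i_0$ has $w_{i_0}$-value at least the threshold. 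With that adjustment the inequality goes through. The equality half, via $L\otimes_K\hat K\cong\prod_i\hat L_i$ (using separability to kill nilpotents) and the $\pi^{\,j}\omega_k$ series expansion in each complete discrete factor, is correct as you describe it.
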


\noindent This inequality is known as the \textit{fundamental inequality}. In the case that we have equality, i.e., when the extension is separable and the value group is $\mathbb{Z}$, we call this the \textit{fundamental equality}. All of the extensions $w_1,\ldots,w_n$ in the theorem are conjugate by an automorphism of $L$ over $K$.

The following theorem will allow us to represent extensions of a valuation across a finite extension $L/K$ of fields by elements of $L$. It is a restatement of Theorem 3.2.7 (3) of \cite{EnglerPrestel05} for finite extensions of fields, using Lemma 3.2.8 to see that the hypotheses of Theorem 3.2.7 can be simplified in this case.

\begin{theorem}\label{thm:distinguish}
Let $L / K$ be a finite extension of fields and $v$ a valuation on $K$. Let $w_1,\ldots,w_n$ be distinct valuations on $L$ extending $v$. Then given $a_1,\ldots,a_n \in L$ such that $w_i(a_i) \geq 0$ for all $i$, there is $a \in L$ such that $w_i(a) \geq 0$ for all $i$ and $w_i(a-a_i) > 0$ for all $i$.
\end{theorem}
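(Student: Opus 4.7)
My plan is to reduce the statement to constructing ``partition of unity'' elements $e_1, \ldots, e_n \in L$ with $w_i(e_i - 1) > 0$ and $w_j(e_i) > 0$ for all $j \neq i$. Given such $e_i$, setting $a := \sum_i e_i a_i$ does the job: for each fixed $i$,
\[
a - a_i = (e_i - 1) a_i + \sum_{j \neq i} e_j a_j,
\]
and every summand has strictly positive $w_i$-valuation (using $w_i(a_k) \geq 0$ throughout). The ultrametric inequality then gives $w_i(a - a_i) > 0$, and hence $w_i(a) \geq \min(w_i(a - a_i), w_i(a_i)) \geq 0$.

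To build the $e_i$'s I would exploit the fact that all of $w_1, \ldots, w_n$ restrict to the same valuation $v$ on $K$: this forces the valuation rings $\mc{O}_{L, w_i}$ to be pairwise incomparable (if $\mc{O}_{L, w_i} \subseteq \mc{O}_{L, w_j}$, then $w_j$ coarsens $w_i$ while restricting to the same $v$, forcing $w_i = w_j$), which is Lemma 3.2.8 of \cite{EnglerPrestel05}. Incomparability yields, for each pair $i \neq j$, an element $b_{ij} \in L$ with $w_i(b_{ij}) > 0$ and $w_j(b_{ij}) < 0$, after possibly multiplying or dividing by a suitable element of $K^\times$ to arrange strict signs. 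The quotient $c_{ij}^{(N)} := 1/(1 + b_{ij}^N)$ then satisfies $w_i(c_{ij}^{(N)}) = 0$, $w_i(1 - c_{ij}^{(N)}) = N w_i(b_{ij})$, and $w_j(c_{ij}^{(N)}) = -N w_j(b_{ij})$, all of which can be made arbitrarily large by choosing $N$ large.

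The main work is consolidating these pairwise gadgets into a single $e_i$ controlling all $w_k$ simultaneously. The naive attempt $e_i := \prod_{j \neq i} c_{ij}^{(N)}$ is delicate, because $c_{ij}^{(N)}$ is designed only to behave well under $w_i$ and $w_j$; under a third $w_k$ it could even have negative valuation. This cross-interaction is the real obstacle, and managing it either requires a careful simultaneous choice of the $b_{ij}$'s and the exponents, or an induction on $n$ in which the easy $n=2$ case is combined with a recursive approximation step. Once the $e_i$'s are in hand, everything else is elementary ultrametric bookkeeping. In practice, since the statement is explicitly the finite-extension form of Theorem 3.2.7(3) of \cite{EnglerPrestel05} combined with Lemma 3.2.8 there, the cleanest option is simply to invoke that theorem directly, which is what I would do in a written-out proof.
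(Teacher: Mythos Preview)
The paper gives no proof of this theorem at all; it simply cites Engler--Prestel, stating that it is ``a restatement of Theorem 3.2.7 (3) of \cite{EnglerPrestel05} for finite extensions of fields, using Lemma 3.2.8 to see that the hypotheses of Theorem 3.2.7 can be simplified in this case.'' Your final recommendation to invoke exactly that theorem and lemma is therefore identical to what the paper does.

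One genuine slip in your sketch, though: in the reduction to partition-of-unity elements you write ``using $w_i(a_k) \geq 0$ throughout,'' but the hypothesis only gives $w_k(a_k) \geq 0$, not $w_i(a_k) \geq 0$ for $k \neq i$. So with merely $w_i(e_j) > 0$ the term $e_j a_j$ need not have positive $w_i$-valuation, and the displayed computation of $w_i(a - a_i)$ breaks. The fix is easy and already implicit in your construction: since there are only finitely many $a_k$ and $w_i$, choose the exponents $N$ large enough that $w_i(e_j) > \max_k(-w_i(a_k), 0)$ for all $i \neq j$. Your $c_{ij}^{(N)}$ gadgets do make $w_j(c_{ij}^{(N)})$ arbitrarily large, so this is available---but the reduction as you stated it, with only $w_j(e_i) > 0$, does not suffice.
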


Let $(K,v)$ be a valued field. If $(K,v)$ has no proper separable immediate extensions, then $K$ is Henselian. We call such a $K$ \textit{algebraically maximal}. The converse is only true if $K$ is \textit{finitely ramified}: if the residue field has characteristic zero, or if it has characteristic $p$ and there are only finitely many elements of the value group between $0$ and $1 = v(p)$.

\begin{theorem}[Theorem 4.1.10 of \cite{EnglerPrestel05}]\label{thm:alg-max}
Suppose that $(K,v)$ is finitely ramified. Then $(K,v)$ is Henselian if and only if it is algebraically maximal.
\end{theorem}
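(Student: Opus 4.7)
The plan is to prove the two implications separately, noting that only the converse requires the finitely ramified hypothesis.

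For \emph{algebraically maximal implies Henselian}, I would use that the Henselization $K^h$ is always a separable immediate algebraic extension of $K$ (this is a standard construction: $K^h$ can be realized as the fixed field of the decomposition group of some extension of $v$ to $\overline{K}$, and one checks it is separable and immediate). If $(K,v)$ admits no proper separable immediate extension, then $K^h = K$, so $K$ is Henselian by property (1) of Definition \ref{def:Hensel}. This direction is easy and does not require finite ramification.

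For the harder direction, \emph{Henselian and finitely ramified implies algebraically maximal}, the plan is to argue by contradiction via the theory of pseudo-Cauchy (pseudo-convergent) sequences. Assume $(L,w)$ is a proper separable immediate algebraic extension of $(K,v)$, and pick $a \in L \setminus K$. Since the extension is immediate, for every $b \in K$ we can find $b' \in K$ with $w(a-b') > w(a-b)$; this produces a pseudo-Cauchy sequence $(a_\nu)$ in $K$ with pseudo-limit $a$. Following Kaplansky's classification, such sequences are either of \emph{transcendental type} (every pseudo-limit generates a transcendental extension) or of \emph{algebraic type}, and since $a$ is algebraic over $K$ the sequence $(a_\nu)$ must be of algebraic type. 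Let $f(x) \in K[x]$ be a polynomial of minimal degree for which $w(f(a_\nu))$ is eventually strictly increasing.

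The key step is then to exploit the Henselian property together with finite ramification to locate a root of $f$ in $K$, contradicting minimality of $[L:K]$ (or contradicting $a \notin K$ when $f$ is taken to be the minimal polynomial of $a$). Concretely, for $\nu$ large enough the sequence $a_\nu$ should satisfy $w(f(a_\nu)) > 2 w(f'(a_\nu))$, so that Hensel's lemma in the form (2) of Definition \ref{def:Hensel} produces a root $b \in K$ with $w(a - b) = w(a_\nu - b)$ arbitrarily large. The main obstacle, and the place where finite ramification is needed, is verifying that $w(f(a_\nu))$ really does outpace $2 w(f'(a_\nu))$. In mixed characteristic, bounding $v(f'(a_\nu))$ from above requires controlling the ramification: if there were infinitely many value group elements between $0$ and $v(p)$, the derivatives could pick up arbitrarily large valuations and defeat the Hensel estimate, which is precisely the wild-ramification phenomenon that prevents the equivalence in general. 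Under the finitely ramified hypothesis one obtains a uniform bound on $v(f'(a_\nu))$ (via a van~der~Waerden-type argument on the separable minimal polynomial), allowing the Hensel estimate to kick in and completing the contradiction.
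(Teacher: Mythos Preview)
The paper does not give its own proof of this statement; it is quoted as Theorem~4.1.10 of \cite{EnglerPrestel05} and used as a black box. So there is nothing in the paper to compare your argument against directly.

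Your easy direction is fine: the Henselization is a separable immediate algebraic extension, so if $(K,v)$ admits no such proper extension then $K=K^h$.

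For the converse, your pseudo-Cauchy strategy is a legitimate line of attack, but the sketch has a real gap exactly where you flag it. You are right that $v(f'(a_\nu))$ is eventually constant, say equal to $\delta$, since $\deg f'<\deg f$ and $f$ was chosen of minimal degree. What you need is that $v(f(a_\nu))$ eventually exceeds $2\delta$, and ``eventually strictly increasing'' does not give this: the value group $\Gamma$ of a finitely ramified field need not be archimedean, so a strictly increasing net in $\Gamma$ can stay bounded below a fixed element. Your appeal to ``a van der Waerden-type argument on the separable minimal polynomial'' does not explain how finite ramification forces the Hensel inequality; you have named the obstacle without overcoming it.

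The argument in \cite{EnglerPrestel05} does not go through pseudo-Cauchy sequences. One takes the convex subgroup $\Delta\cong\mathbb{Z}$ of $\Gamma$ generated by $v(p)$ (this is where finite ramification is used) and decomposes $v$ as the composite of the coarsening $\bar v\colon K\to\Gamma/\Delta$ with the induced discrete valuation $v_0$ on the residue field $K_0$ of $\bar v$. Both $(K,\bar v)$ and $(K_0,v_0)$ inherit the Henselian property. Since $p$ becomes a unit for $\bar v$, the residue field $K_0$ has characteristic $0$, so $(K,\bar v)$ is defectless by Ostrowski; hence for a $v$-immediate finite separable $L/K$ one gets $[L:K]=[L_0:K_0]$. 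Now $(K_0,v_0)$ is a Henselian field of mixed characteristic with value group $\mathbb{Z}$, and such fields are separably algebraically maximal by the classical completion argument (the completion is maximally complete and $K_0$ is separably algebraically closed in it). Since $L_0/K_0$ is a characteristic-zero extension it is separable, and it is $v_0$-immediate, so $L_0=K_0$ and thus $L=K$. If you want to salvage the Kaplansky route, you would need to supply an honest argument linking finite ramification to the growth of $v(f(a_\nu))$ relative to $\delta$; the decomposition above is how the hypothesis is actually cashed in.
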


\subsection{Computable valued fields}
\label{sec:lang}

There are many natural languages in which to talk about valued fields \cite{Chatzidakis11}. Three of them are:
\begin{enumerate}
	\item Macintyre's language $\mc{L}_{\text{div}}$ which adds a binary relation $a \mid b$ to the ring language, with $a \mid b$ interpreted as $v(a) \leq v(b)$.
	\item Robinson's two-sorted language $\mc{L}_{\text{Rob}}$ which has a sort for the value group (as an ordered group) and contains the valuation function $v \colon K \to \Gamma \cup \infty$.
	\item The three-sorted language $\mc{L}_{\Gamma,k}$ which extends $\mc{L}_{\text{Rob}}$ by adding the residue field and residue map.
\end{enumerate}
A computable valued field is a computable field (i.e., the domain is a computable set, and the operations of addition and multiplication are computable) together with a computable valuation. By this we mean, in $\mc{L}_{\text{div}}$, that the relation $a \mid b$ is computable; in $\mc{L}_{\text{Rob}}$, that there is a computable group $\Gamma$ and that the valuation map $v$ is computable; and in $\mc{L}_{\Gamma,k}$, that in addition the residue field $k$ and the residue map are computable. It follows from the proof of the following proposition that all three ways of presenting a valued field are \textit{effectively bi-interpretable} (see \cite{HTMelnikovMillerMontalban}), and hence it does not matter which we choose.

\begin{proposition}\label{prop:comp-value-gp}
Let $(K,v)$ be a computable valued field in the language $\mc{L}_{\mathrm{div}}$. There is a computable presentation $\Gamma$ of the value group of $K$ and a computable presentation $k$ of the residue field of $K$ so that the valuation map $v \colon K \to \Gamma$ and the reduction map $\mc{O}_K \to k$ are computable.
\end{proposition}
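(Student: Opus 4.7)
The plan is to treat the divisibility relation as the single primitive and read off everything else from it. Define on $K^* = K \setminus \{0\}$ the relation $a \sim b$ iff $a \mid b$ and $b \mid a$; by hypothesis $\sim$ is computable, and classically $a \sim b$ iff $v(a) = v(b)$, so $\Gamma \cong K^*/\sim$ as sets, with ordering given by $\mid$ and group law inherited from multiplication in $K$. Similarly, $\mathcal{O}_K = \{x \in K : 1 \mid x\}$ and $\mathfrak{m}_K = \{x \in K : 1 \mid x \text{ and } x \nmid 1\}$ are both computable subsets of $K$, and the relation $a \equiv b \pmod{\mathfrak{m}_K}$ is computable because it is $(a - b) \in \mathfrak{m}_K$, a difference of computable conditions.

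To turn $K^*/\sim$ into a genuine computable presentation of $\Gamma$, I enumerate $K^* = \{a_0, a_1, \ldots\}$ and let $D_\Gamma := \{n \in \omega : a_n \in K^* \text{ and } a_n \not\sim a_i \text{ for all } i < n\}$. Since $\sim$ is computable, $D_\Gamma$ is a computable subset of $\omega$; it will serve as the domain of $\Gamma$. The map $\rho \colon K^* \to D_\Gamma$ sending $a$ to the least $n$ with $a \sim a_n$ is computable, so the valuation $v \colon K \to \Gamma \cup \{\infty\}$ (defined by $v(a) := \rho(a)$ for $a \neq 0$ and $v(0) := \infty$) is computable. The group and order operations on $D_\Gamma$ are then defined by $\rho(a) + \rho(b) := \rho(ab)$ and $\rho(a) \leq \rho(b) \Leftrightarrow a \mid b$; these are well-defined by the axioms of a valuation, and computable since multiplication in $K$ and the divisibility relation are computable.

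For the residue field, I carry out exactly the same representative-selection inside $\mathcal{O}_K$. Enumerate $\mathcal{O}_K = \{b_0, b_1, \ldots\}$ and let $D_k := \{n \in \omega : b_n \not\equiv b_i \pmod{\mathfrak{m}_K} \text{ for all } i < n\}$, which is computable. The reduction map $\mathcal{O}_K \to D_k$ sending $b$ to the least $n$ with $b - b_n \in \mathfrak{m}_K$ is computable, and the field operations on $D_k$ are inherited from $\mathcal{O}_K$ in the obvious way, e.g.\ $\overline{b_m} + \overline{b_n}$ is computed by adding $b_m + b_n$ in $K$ and applying the reduction map. Well-definedness is classical; computability follows from the computability of addition and multiplication in $K$ together with the decidability of congruence modulo $\mathfrak{m}_K$.

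There is no real obstacle here: the content is the observation that in the language $\mathcal{L}_{\text{div}}$ both of the quotients defining $\Gamma$ and $k$ are by computable equivalence relations on computable sets, so the standard least-representative trick produces computable presentations and makes both the valuation and reduction maps computable by construction. The resulting translations back and forth among the three languages described above are uniform, which gives the effective bi-interpretability claimed in the surrounding text.
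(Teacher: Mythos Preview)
Your proof is correct and follows essentially the same route as the paper: both identify $\Gamma$ with $K^{\times}/\!\sim$ for the computable relation $a\sim b\Leftrightarrow(a\mid b)\wedge(b\mid a)$, and $k$ with $\mc{O}_K$ modulo the computable congruence $a-b\in\mathfrak{m}_K$. You spell out the least-representative device for turning a quotient by a computable equivalence relation into a genuine computable presentation, which the paper leaves implicit; otherwise the arguments coincide.
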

\begin{proof}
The value group $\Gamma$ is the quotient of $K^{\times}$ by the computable equivalence relation
\[ a \sim b \Longleftrightarrow (a \mid b) \wedge (b \mid a).\]
The group operation is given by $[a] + [b] = [ab]$. The ordering on the value group is that induced by $a \mid b$. The valuation map $v \colon K \to \Gamma$ is just the quotient map.

We can compute, inside $K$, the valuation ring $\mc{O}_K$. The residue field is the quotient of $\mc{O}_K$ by its maximal ideal $\mathfrak{m} = \{ a \in \mc{O}_K : v(a) > 0 \}$. So we can present the residue field as a quotient of the valuation ring by the computable equivalence relation
\[ a \sim b \Longleftrightarrow v(a - b) > 0.\qedhere\]
\end{proof}

\subsection{Algebraically closed valued fields}

The theory $\ACVF$ of \textit{algebraically closed valued fields} is axiomatized by saying that $(K,v)$ is a valued field which is algebraically closed as a field (and recalling that we assumed that the valuation map is surjective). For a reference on algebraically closed valued fields, see \cite{Chatzidakis11}. The theory is complete (after naming the characteristic and the characteristic of the residue field), decidable, and admits quantifier elimination. $\ACVF$ is the model completion of the theory of valued fields.

\subsection{\texorpdfstring{$p$-adically closed valued fields}{p-adically closed valued fields}}

A valued field $(K,v)$ extending $\mathbb{Q}$ is \textit{formally $p$-adic} if:
\begin{enumerate}
	\item $v$ extends the $p$-adic valuation on $\mathbb{Q}$,
	\item the residue field is $\mathbb{F}_p$, and
	\item $v(p)$ is the least positive element of the value group.
\end{enumerate}
$K$ is \textit{$p$-adically closed} if in addition:
\begin{enumerate}[resume]
	\item $K$ is Henselian and
	\item the value group is elementarily equivalent to $\mathbb{Z}$, i.e., a model of Presburger arithmetic.\footnote{The models of Presburger arithmetic are the discrete ordered abelian semigroups with a zero and a least element $1$, such that for all $x$ and $n$ there is $y$ such that $x = ny + r$ for some $r = 0,\ldots,n-1$.}
\end{enumerate}
This axiomatizes the complete theory $\pCF$ of $p$-adically closed fields, which is the theory of the $p$-adics $\mathbb{Q}_p$. See \cite{PrestelRoquette84} for a reference on formally $p$-adic fields.

In a formally $p$-adic field, we can identify $\mathbb{Z}$ with the convex subgroup of the value group $\Gamma$ generated by $v(p)$. The \textit{coarse valuation} $\bar{v}$ is the composition of $v$ with the quotient map $\Gamma \to \Gamma / \mathbb{Z}$. Then $\Gamma$ is elementarily equivalent to $\mathbb{Z}$ if and only if $\Gamma / \mathbb{Z}$ is divisible. We call $\Gamma / \mathbb{Z}$ the \textit{coarse value group}.

Every formally $p$-adic field embeds into a \textit{$p$-adic closure}, that is, an algebraic extension which is $p$-adically closed. The $p$-adic closure is not necessarily unique. The theory $\pCF$ is the model companion of the theory of formally $p$-adic fields, and hence every formula is equivalent, modulo $\pCF$, to an existential formula. In fact, $\pCF$ eliminates quantifiers after adding the predicate $P_n$ which picks out the $n$th powers \cite{Macintyre76}. Thus the elementary diagram of any computable model of $\pCF$ is decidable. We denote by $P_n^*$ the non-zero $n$th powers. The theory $\pCF$ also admits definable Skolem functions \cite{vandenDries84}. Finally, there is a cell decomposition theorem for definable sets in a $p$-adically closed field (see \cite{Denef86,ScowcroftvandenDries88,Mourgues09}).

\begin{definition}
The collections of \textit{cells} in $K$ is defined recursively by:
\begin{enumerate}
	\item If $X$ is a single point in $K^n$, then $X$ is a (0)-cell.
	\item If $\Box_{1}$ and $\Box_{2}$ are either $<$, $\leq$, or no condition, $\gamma_1,\gamma_2 \in v(K) \cup \{-\infty, \infty\}$ $c \in K$, $k \in \omega$, and $\lambda \in K^\times$, then
	\[ \{ x \in K: \gamma_1 \Box_1 v(x-c) \Box_2 \gamma_2 \text{ and } P^*_{k}(\lambda(x-c))\} \]
	is a (1)-cell.
	\item If $f$ is a definable continuous function from a $(i_1,\ldots,i_n)$-cell $C$ to $K$, then the graph of $f$ is a $(i_1,\ldots,i_n,0)$-cell.
	\item If $B$ is a $(i_1,\ldots,i_n)$-cell, $f$, $g$, and $h$ are definable continuous functions from $B$ to $K$, $\lambda \in K^\times$, and $\Box_{1}$ and $\Box_{2}$ are either $<$, $\leq$, or no condition,	then
	\[ C = \{(\bar{x},y)\in B\times K:v(f(\bar{x}))\Box_{1}v(y-g(\bar{x}))\Box_{2}v(h(\bar{x}))\text{ and }P^*_{k}(\lambda(y-g(\bar{x}))\} \]
	is a $(i_1,\ldots,i_n,1)$-cell.
\end{enumerate}
\end{definition}

\begin{theorem}[Cell decomposition for $\pCF$]
Let $(K,v)$ be a $p$-adically closed valued field. Let $S \subseteq K^n$ be a definable set. Then $S$ can be partitioned into finitely many cells. Moreover, the parameters over which the cells are defined are all definable over the parameters of $S$.
\end{theorem}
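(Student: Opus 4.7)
The plan is to argue by induction on the number of free variables $n$, taking Macintyre's quantifier elimination (valid after adjoining the power predicates $P_m$) as the principal reduction. By quantifier elimination, every definable $S \subseteq K^n$ is a Boolean combination of atomic sets of the form $\{\bar{x} : f(\bar{x}) = 0\}$ or $\{\bar{x} : P_m(g(\bar{x}))\}$ for polynomials $f, g$ with coefficients among the parameters of $S$. Since finite Boolean combinations of cell decompositions can be refined to a common partition into cells, it suffices to produce a decomposition on which each such atomic predicate is constantly true or false.

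For the base case $n = 1$, the key technical ingredient, essentially due to Denef, is: for any polynomial $f(y) \in K[y]$, one can choose centers $c_1,\ldots,c_r \in K$, rational exponents $\alpha_i$, and $\beta_i \in v(K)$ such that $K$ partitions into finitely many $(1)$-cells on each of which $v(f(y))$ equals $\alpha_i v(y - c_i) + \beta_i$ for a fixed $i$. Refining further using $P_m$ applied to a uniformizer and to representatives of $K^\times / (K^\times)^m$, and exploiting the finiteness of the residue field $\mathbb{F}_p$, one arranges that the angular component of $f(y)$ is also constant on each cell; this handles every atomic predicate $P_m(f(y))$ and every zero-set $\{y : f(y) = 0\}$ simultaneously.

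For the inductive step, I would write $K^{n+1}$ with coordinates $(\bar{x}, y)$ and apply the one-variable analysis to each polynomial occurring in the defining formula, regarded as a polynomial in $y$ with coefficients in $K[\bar{x}]$. The combinatorial data of the one-variable decomposition---number of centers, slopes $\alpha_i$, and residue/angular behavior---may jump with $\bar{x}$, but can take only finitely many \emph{shapes}. One partitions $K^n$ into the finitely many definable pieces on which this data is uniform, then applies the inductive hypothesis on each piece to decompose the base into $(i_1,\ldots,i_n)$-cells; the definable Skolem functions available in $\pCF$ then allow one to realize the centers $c_i(\bar{x})$, the bounds, and the multipliers $\lambda$ as definable functions of $\bar{x}$ over the parameters of $S$, giving the parameter clause.

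The hard part is Denef's one-variable lemma and, more pressingly, its uniform version over parameters. Its proof selects a good $K$-rational approximation to each root of $f$ in the algebraic closure and then computes $v(f(y))$ piecewise-linearly from the ultrametric triangle inequality, using equality on terms of distinct valuation. The uniform version demands control over how the root configuration of $f(\bar{x}, y)$ varies with $\bar{x}$; this is where Hensel's lemma in the Henselian field $K$ and the finiteness of the residue field $\mathbb{F}_p$ are indispensable, bounding the case analysis to finitely many possibilities definable over the parameters.
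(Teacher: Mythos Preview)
The paper does not prove this theorem at all: it is stated in the preliminaries as a known result and attributed to the literature (Denef, Scowcroft--van den Dries, Mourgues). There is therefore no ``paper's own proof'' to compare against; the cell decomposition is quoted as a black box and then applied in Section~5.2.

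Your sketch is broadly in the spirit of Denef's original argument---induction on the number of variables, quantifier elimination via the $P_m$ predicates, and a one-variable lemma controlling $v(f(y))$ piecewise-linearly in terms of distances to approximate roots, then made uniform in parameters---so as an outline of how the cited references proceed it is reasonable. That said, it is only an outline: the uniform-in-$\bar{x}$ version of the one-variable lemma and the bookkeeping that turns ``finitely many shapes'' into an honest partition into cells of the stated form are the substantive parts, and your proposal acknowledges but does not carry them out. For the purposes of this paper none of that is needed; the theorem is simply being invoked, not reproved.
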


\section{Extending valuations}

We begin this section by showing that we can effectively embed valued fields into their Henselizations and into algebraically closed valued fields. This result appeared in \cite{Smith} and we repeat the proof here as we will later build on these ideas.

\begin{proposition}[Theorem 3 of \cite{Smith}]\label{prop:embeds-acvf}
Let $(K,v)$ be a computable valued field. There is a computable embedding of $K$ into a computable presentation $\overline{K}$ of its algebraic closure and a computable extension of $v$ to $\overline{K}$.
\end{proposition}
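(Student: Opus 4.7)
The plan is to build $\overline{K}$ together with the extended valuation $w$ simultaneously, as an increasing union of a computable chain of finite valued extensions $(K,v) = (K_0,v_0) \subseteq (K_1,v_1) \subseteq \cdots$. At stage $s+1$ we adjoin a root of some polynomial $f$, drawn from an enumeration that eventually exhausts all polynomials over all previous $K_t$. Then $\overline{K} = \bigcup_s K_s$ is the required computable algebraic closure and $w = \bigcup_s v_s$ the required computable valuation on it extending $v$.

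For the field-theoretic part, we use Rabin's approach: given $(K_s,v_s)$ and $f \in K_s[x]$, we computably form a finite extension $K_{s+1} = K_s(\alpha)$ containing a root of $f$, without requiring a splitting algorithm for $K$, since Rabin's method handles uncertain factorizations via delayed identifications among polynomial expressions in the adjoined generators.

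The new ingredient is extending the valuation at each stage. For the finite extension $K_{s+1}/K_s$ of degree $n$, Theorem \ref{thm:fund-ineq} bounds the number of extensions of $v_s$ to $K_{s+1}$ by $n$, and classical extension theorems (Chevalley) guarantee that at least one exists. Each candidate extension is described by finite data: the value $v_{s+1}(\alpha)$, together with an embedding of $\Gamma_s$ into an enlarged (finitely generated) value group, and the resulting values on the $K_s$-basis $\{1,\alpha,\ldots,\alpha^{n-1}\}$ of $K_{s+1}$. These data jointly determine $v_{s+1}$ on all of $K_{s+1}$ by the axioms. I enumerate such candidates and verify each one by checking multiplicativity and the ultrametric inequality on the finitely many products of basis elements prescribed by the minimal polynomial of $\alpha$.

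The main obstacle is making this search effective. This boils down to two requirements: candidates must be specifiable by finite data, and validity must be checkable in finitely many steps. Both hold because $K_{s+1}$ is a finite $K_s$-module with an explicit basis and prescribed multiplicative relations encoded by the minimal polynomial of $\alpha$; once the axioms are verified on a generating set of products, they propagate to all of $K_{s+1}$ by $K_s$-linearity. Classical existence ensures at least one candidate passes verification, so the search terminates, and consistency of the chain (that $v_{s+1}$ restricts to $v_s$) is built into the search by restricting attention to candidates that extend $v_s$ on the $K_s$-basis element $1$ and are otherwise constrained by the given $v_s$.
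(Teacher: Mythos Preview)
Your approach has a genuine gap at the step where you extend the valuation across a finite extension $K_{s+1} = K_s(\alpha)$. You assert that a candidate extension is ``described by finite data: the value $v_{s+1}(\alpha)$ \ldots and the resulting values on the $K_s$-basis $\{1,\alpha,\ldots,\alpha^{n-1}\}$,'' and that ``these data jointly determine $v_{s+1}$ on all of $K_{s+1}$ by the axioms.'' This is false: a valuation is not determined by its values on a $K_s$-basis, because the ultrametric inequality only gives $v(\sum c_i \alpha^i) \geq \min_i v(c_i \alpha^i)$, with no control when the minimum is attained more than once. Concretely, take $K_s = \mathbb{Q}$ with the $5$-adic valuation and $\alpha = i$: both extensions of $v$ to $\mathbb{Q}(i)$ assign value $0$ to $1$ and to $i$, yet they disagree on $2+i$. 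For the same reason your claim that validity ``propagates to all of $K_{s+1}$ by $K_s$-linearity'' fails; valuations are not linear, and no finite set of checks on basis products certifies the ultrametric inequality for the infinitely many $K_s$-linear combinations. There is also a secondary problem: Rabin's delayed identifications mean you do not actually know $[K_{s+1}:K_s]$ or a true $K_s$-basis at stage $s+1$, so any valuation you assign then must later be shown compatible with identifications discovered at subsequent stages, and you do not address this.

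The paper's proof avoids all of this with a one-line model-theoretic argument: since $\ACVF$ is the model completion of the theory of non-trivially valued fields, $\ACVF \cup \text{Diag}_{\text{at}}(K,v)$ is complete, hence decidable, so an effective Henkin construction produces a computable model $(L,w)$, inside which the algebraic closure of $K$ is c.e.\ and yields the desired computable $(\overline{K},w)$.
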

\begin{proof}
If $v$ is the trivial valuation, then extend it to the trivial valuation on $\overline{K}$ under any computable embedding of $K$ into its algebraic closure. Otherwise, the theory $\ACVF \cup \text{Diag}_{\text{at}}(K)$ is complete, hence decidable. So it has a computable model $(L,w)$ by an effective Henkin construction (see, for example, \cite{Harizanov98}), and we get a computable embedding of $K$ into $L$ by mapping $x \in K$ to the interpretation of the constant representing $x$ in $L$. In $L$, we can enumerate the algebraic closure $\overline{K}$ of $K$ and hence construct a computable presentation.
\end{proof}

\noindent A consequence of this is that every computable non-trivially-valued field $K$ embeds into a model of $\ACVF$ whose underlying field is algebraic over $K$.

\begin{lemma}\label{lem:find-extensions}
Let $(K,v)$ be a computable finite extension of valued fields of $\mathbb{Q}$ with the $p$-adic valuation. Given $K(a)$ a finite field extension of $K$, we can compute a list of all of the extensions of $v$ to $K(a)$, with no duplication, as well as the ramification indices and residue degrees of these extensions. We can also compute the residue fields and the value groups of these extensions as subsets of $\overline{\mathbb{F}}_p = k_{\overline{\mathbb{Q}}}$ and $\mathbb{Q} = v(\overline{\mathbb{Q}})$ respectively. This computation is uniform in the generators for $K$ over $\mathbb{Q}$.
\end{lemma}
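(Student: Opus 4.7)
The plan is to embed $K$ into its algebraic closure $\overline{K}$ with a computable extension of $v$, locate the roots of the minimal polynomial of $a$, and identify the extensions of $v$ to $K(a)$ with their $K^h$-conjugacy classes.

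First, since $K/\mathbb{Q}$ is finite, $K$ has a splitting algorithm by iterated application of Theorem \ref{Kronecker}, so I can compute $f := \mathrm{irr}(a, K) \in K[x]$, of degree $n = [K(a):K]$. By Proposition \ref{prop:embeds-acvf} I then produce a computable presentation of $\overline{K}$ together with a computable extension $\bar{v}$ of $v$; inside $\overline{K}$ the subfield $\overline{\mathbb{Q}}$ is the algebraic closure of $\mathbb{Q}$ and supplies the fixed presentations $\mathbb{Q} = \bar{v}(\overline{\mathbb{Q}}^\times)$ and $\overline{\mathbb{F}}_p = k_{\overline{\mathbb{Q}}}$ required by the lemma. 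I enumerate the $n$ roots $\alpha_1, \ldots, \alpha_n \in \overline{K}$ of $f$.

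Each root $\alpha_i$ gives an embedding $K(a) \hookrightarrow \overline{K}$, $a \mapsto \alpha_i$, and hence an extension $w_i$ of $v$ to $K(a)$ computable by $w_i(g(a)) = \bar{v}(g(\alpha_i))$. Every extension arises this way, and classically $w_i = w_j$ iff $\alpha_i$ and $\alpha_j$ are conjugate over the Henselization $K^h \subseteq \overline{K}$, equivalently, iff they are roots of the same irreducible factor of $f$ over $K^h$. The core effective task is to compute this partition of the set of roots, and this is the main obstacle. My plan is to run the classical Newton-polygon-plus-Hensel procedure (Ore's algorithm), which is applicable because $K^h$ is an immediate extension of $K$ and the completion $K_v$ is a $p$-adic field: iteratively form the Newton polygon of each tentative factor, separate slopes using the Hensel characterization (item (3) of Definition \ref{def:Hensel}) after rescaling so that the residual factorization is coprime, and recurse on each piece until no further refinement is possible. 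Termination is guaranteed, and the fundamental equality (Theorem \ref{thm:fund-ineq}) supplies a verifiable halting certificate: in our setting (separable extension and $\Gamma_K$ discrete) one must have $\sum_k e(w_k/v) f(w_k/v) = n$, so we halt once the tentative factorization has total degree $n$ with each factor recognized as irreducible over $K^h$.

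Once the partition $\{C_1, \ldots, C_r\}$ of roots is in hand, for each class $C_k$ with representative $\alpha \in C_k$ the extension $w_k$ is given by the pullback $g(a) \mapsto \bar{v}(g(\alpha))$; the value group $\Gamma_{w_k} \subseteq \mathbb{Q}$ is generated over $\Gamma_K$ by the values $\bar{v}(\beta)$ for $\beta$ ranging over $K(\alpha)^\times$ (finitely many suffice since $[\Gamma_{w_k} : \Gamma_K]$ is finite); and the residue field $k_{w_k} \subseteq \overline{\mathbb{F}}_p$ is generated over $k_K$ by the residues of integral elements of $K(\alpha)$. The ramification index $e_k = [\Gamma_{w_k}:\Gamma_K]$ and the residue degree $f_k = [k_{w_k}:k_K]$ are then read off, and all of the above is uniform in the generators of $K$ over $\mathbb{Q}$.
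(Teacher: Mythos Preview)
Your overall framework matches the paper's: embed $K$ into $(\overline{\mathbb{Q}}, \bar v)$ via Proposition~\ref{prop:embeds-acvf}, list the roots $\alpha_1,\ldots,\alpha_n$ of the minimal polynomial of $a$, pull back $\bar v$ along each $a\mapsto\alpha_i$ to obtain a (possibly redundant) list of extensions of $v$, and then use the fundamental equality of Theorem~\ref{thm:fund-ineq} as the termination certificate. The divergence is in how the redundant list is de-duplicated.

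You invoke the Newton-polygon-plus-Hensel procedure (Ore's algorithm) to factor $f$ over $K^h$ directly, asserting that ``termination is guaranteed.'' This step is not fully justified: the basic Newton-polygon-plus-Hensel iteration can stall when a tentative factor has a single Newton slope and its residual polynomial is a pure power of an irreducible over the residue field, so ``recurse until no further refinement is possible'' may stop short of the full $K^h$-factorization; in that situation your fundamental-equality check will simply never fire. Handling such factors is exactly what the Montes/OM machinery is built for, and you do not invoke it. The paper sidesteps the issue entirely with a pure unstructured search. The condition ``$u_i\neq u_j$'' is c.e.\ (search $K(a)$ for an element on which the two valuations disagree), and the condition $\sum_i e(u_i/v)f(u_i/v)=[K(a):K]$ is c.e.\ because the value group and residue field of each $u_i$ are c.e.\ subsets of $\mathbb{Q}$ and $\overline{\mathbb{F}}_p$, giving nondecreasing approximations to $e(u_i/v)$ and $f(u_i/v)$; here the paper uses the inductive hypothesis that the value group and residue field of $K$ are already known as finitely generated computable substructures. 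One then dovetails over all candidate sublists of $\{w_1,\ldots,w_n\}$ and halts when both c.e.\ conditions fire for some sublist. This is simpler and self-contained---no factorization algorithm over $K^h$ or $K_v$ is needed at all---and its correctness and termination follow immediately from Theorem~\ref{thm:fund-ineq}.
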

\begin{proof}
We argue by induction on the number of generators of $K$. Since we know the generators for $K$, $K$ has a splitting algorithm. Embedding $(K,v)$ into $(\overline{K},w) = (\overline{\mathbb{Q}},w)$ via the previous lemma, we can compute the image of $K$ in $\overline{K}$. We can compute the minimal polynomial of $a$ over $K$, and use it to find the embeddings of $K(a)$ into $\overline{K}$ over $K$. By restricting $w$ to $K(a)$ under each of these embeddings, we get a list of the possible extensions of $v$ to $K(a)$, possibly containing duplicates.

Given $u_1,\ldots,u_n$ valuations on $K(a)$ extending $v$, we claim that we can tell in a c.e.\ way that they are a complete list, without duplicates, of the extensions of $v$ to $K(a)$. To see that there are no duplicates in the list, we just have to find elements of $K(a)$ on which they differ. Since $K$ is a finite extension of $\mathbb{Q}$, $v(K) \cong \mathbb{Z}$, and so by Theorem \ref{thm:fund-ineq}, if $u_1,\ldots,u_n$ is a complete list of the extensions of $v$ to $K(a)$, then
\[ \sum_{i=1}^n e(u_i / v) f(u_i/v) = [K(a) : K]. \]
Note that we can compute $[K(a) : K]$ using the splitting algorithm for $K$. Inductively, we can compute the value group and residue field of $K$ as subsets of the value group $\mathbb{Q}$ and the residue field $\overline{\mathbb{F}}_p$ of $\overline{\mathbb{Q}}$ respectively. Since they are finitely generated substructures and we know the residue degree and ramification index of $(K,v)$ over $\mathbb{Q}$, we can compute finite sets of generators for the value group and residue field of $(K,v)$. So for each valuation $u$ from among $u_1,\ldots,u_n$, we can compute the value group and residue field of $u$ as c.e.\ subsets of $\mathbb{Q}$ and $\overline{\mathbb{F}}_p$. So we can compute increasing sequences with limits $e(u/v)$ and $f(u/v)$.

We always have, for any such list with no duplication,
\[ \sum_{i=1}^n e(u_i / v) f(u_i/v) \leq [K(a) : K]. \]
So $u_1,\ldots,u_n$ is a complete list if and only if the increasing approximations to $e(u_i / v)$ and $f(u_i/v)$ we computed above eventually give equality.

When we compute, in this way, a complete list of the extensions of $v$ to $K(a)$, we also get their ramifications indices and residue degrees. Using these values, we can compute the value groups and residue fields of these extensions as subsets of $\mathbb{Q}$ and $\overline{\mathbb{F}}_p$
\end{proof}

Let $(K,v)$ be a computable valued field with a splitting algorithm. Given an element $a$ algebraic over $K$, one can use Newton polygons to decide what possible valuations $a$ can take under an extension of $v$ to $K(a)$. Even if $a$ always has a unique valuation, $K(a)$ may admit multiple distinct extensions of $v$. The following lemma shows that in the general case (i.e., when $K$ is not finitely generated) there is no way to decide in a computable way, from the minimal polynomial of $a$ over $K$, how many extension of $v$ there are.

\begin{proposition}\label{prop:no-comp-ext}
There is a computable algebraic valued field $(K,v)$ with a splitting algorithm such that there is no way to (uniformly in $a$) compute the number of extensions of $v$ to an algebraic extension $K(a)$.
\end{proposition}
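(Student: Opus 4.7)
The plan is to build $K$ as a computable algebraic extension of $\mathbb{Q}$ carrying the $p$-adic valuation (for some fixed odd prime $p$), and to code a chosen non-computable c.e.\ set $A$ into the residue field of $K$ so that, for a computable sequence $(c_e)_{e\in\omega}$ of rationals, the number of extensions of $v$ to $K(\sqrt{c_e})$ encodes membership of $e$ in $A$. Fix such an $A$ with enumeration $A_s$, and fix a computable sequence of distinct integers $c_e$ with $v_p(c_e)=0$ and $\bar{c_e}$ a non-square modulo $p$; initially $x^2-c_e$ is irreducible over $\mathbb{Q}^h=\mathbb{Q}_p$. Recall that, by Hensel's lemma, for any valued extension $(L,w)$ of $(\mathbb{Q},v_p)$, the polynomial $x^2-c_e$ splits over $L^h$ iff $\bar{c_e}$ is a square in the residue field of $L$.

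I would set $K_0=\mathbb{Q}$ and, at each stage $s$, if $e$ enters $A_s\setminus A_{s-1}$, define $K_s=K_{s-1}(\alpha_e)$ where $\alpha_e$ is a root of $x^2-c_e(1+p^{N(e,s)})$ for an integer $N(e,s)$ chosen sufficiently large at that stage. The number $N(e,s)$ is selected by a bookkeeping argument to guarantee that (i) $\bar{\alpha_e}^2=\bar{c_e}$ in the residue field (so $\sqrt{\bar{c_e}}$ enters the residue field of $K_s$, hence of $K$); and (ii) $\alpha_e$ is linearly disjoint over $K_{s-1}$ from the finitely many polynomials that have been considered so far, so that in particular $\sqrt{c_e}\notin K_s$ and no earlier polynomial over some $K_t$ ($t<s$) newly factors in $K_s$. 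Let $K=\bigcup_s K_s$.

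Verification proceeds in three steps. Each $K_s$ is a finitely generated multi-quadratic extension of $\mathbb{Q}$, so by Kronecker's theorem (Theorem~\ref{Kronecker}) it has a splitting algorithm uniformly in its generators; the linear disjointness arrangement above ensures that for every $f\in K_s[x]$ irreducible over $K_s$, $f$ remains irreducible over $K$, yielding a splitting algorithm for $K$. The same arrangement guarantees $\sqrt{c_e}\notin K$ for every $e$. The valuation $v_p$ extends uniquely and computably through each step, because the extension $K_{s-1}(\alpha_e)/K_{s-1}$ admits a unique prolongation of $v$ (one verifies this directly using that $1+p^{N(e,s)}$ is already a square in $K_{s-1}^h$, so $\alpha_e$ is conjugate to $-\alpha_e$ by an element of $\mathrm{Gal}(K_{s-1}^h(\alpha_e)/K_{s-1}^h)$). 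With these properties in hand, $\bar{c_e}\in k_K^2$ iff $\alpha_e$ was ever adjoined iff $e\in A$; combined with $c_e\notin K^2$ and the fundamental equality applied to the separable degree-$2$ extension $K(\sqrt{c_e})/K$ of the $\mathbb{Z}$-valued field $K$, there are exactly $2$ extensions of $v$ to $K(\sqrt{c_e})$ when $e\in A$ and exactly $1$ when $e\notin A$. A uniform algorithm computing the number of extensions of $v$ to $K(a)$ would therefore decide $A$, contradicting non-computability.

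The main obstacle is the bookkeeping in the choice of $N(e,s)$: we must simultaneously preserve a splitting algorithm on $K$ and prevent $\sqrt{c_e}$ from entering $K$ through the union. Both requirements are standard consequences of arranging linear disjointness of successive quadratic extensions, but they need to be orchestrated so that the dependence on $A_s$ does not leak non-computable information into the minimal polynomial over $K_{s-1}$ of any $\alpha_e$, and so that the splitting algorithms at each finite stage cohere in the limit.
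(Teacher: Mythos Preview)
Your coding step has a genuine gap. You pick integers $c_e$ with $v_p(c_e)=0$ and $\bar c_e$ a non-square in $\mathbb{F}_p$, and you want the biconditional ``$\bar c_e$ is a square in $k_K$ iff $e\in A$.'' But the moment a single element $e_0$ enters $A$ and you adjoin $\alpha_{e_0}$ with $\bar\alpha_{e_0}^{\,2}=\bar c_{e_0}$, the residue field of $K$ becomes $\mathbb{F}_{p^2}$. Since $p$ is odd, \emph{every} element of $\mathbb{F}_p^\times$ is a square in $\mathbb{F}_{p^2}^\times$: the squares form the index-$2$ subgroup of order $(p^2-1)/2$, and $p-1$ divides $(p^2-1)/2=(p-1)(p+1)/2$ because $p+1$ is even, so $\mathbb{F}_p^\times$ lies inside that subgroup. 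Hence $\bar c_e$ is a square in $k_K$ for \emph{all} $e$, not just those in $A$. By Hensel's lemma $\sqrt{c_e}\in K^h$ for every $e$; your linear-disjointness bookkeeping keeps $\sqrt{c_e}\notin K$, so $K(\sqrt{c_e})/K$ is a degree-$2$ immediate extension, and the fundamental equality then forces exactly two prolongations of $v$---uniformly in $e$. The test elements $\sqrt{c_e}$ therefore do not separate $e\in A$ from $e\notin A$.

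The underlying obstruction is that $\mathbb{F}_p$ has only one quadratic extension, so all of your residue-field requirements collapse after the first one is met; no amount of bookkeeping with the exponents $N(e,s)$ can repair this. The paper sidesteps the issue by encoding through \emph{ramification} rather than residue degree, and by assigning a distinct prime degree $p_a$ to each requirement: it adjoins $(rq_s)^{1/p_a}$ for primes $q_s\equiv 1\pmod r$, so that the value group picks up an independent $\tfrac{1}{p_a}$ for each $a\in 0'$, and then tests with the elements $r^{1/p_a}$. Your idea could be salvaged along similar lines by replacing the quadratic tests with tests of pairwise coprime prime degree, but as written the residue-field coding does not work.
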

\begin{proof}
Assume that $0'_0 = \varnothing$, and that at each subsequent stage, exactly one element enters $0'$. Fix a presentation $\overline{K}$ of the algebraic closure of $K$ and a computable Rabin embedding of $K$ into $\overline{K}$.

Fix an odd prime $r$. Let $p_1,p_2,\ldots$ be a list of the infinitely many primes $p \neq r$. Begin at stage $0$ with $K_0 = \mathbb{Q}$ with $v_0$ the $r$-adic valuation.

Suppose that at stage $s$, $0'_s = \{a_1,\ldots,a_s\}$. We will have already defined
\[ K_s = \mathbb{Q}((r q_{i})^{\frac{1}{p_{a_i}}} : i = 1,\ldots,s )\]
with the unique extension $v_s$ of the $r$-adic valuation to $K_s$ (the fact that this extension of the valuation is unique follows from the fundamental inequality). Here, $q_1,\ldots,q_s$ are distinct primes $q \equiv 1 \mod r$. Let $a_{s+1} = b$ be the element which enters $0'$ at stage $s+1$. Search for a prime $q_{s+1} \equiv 1 \mod{r}$ which is not $r$ such that, as subsets of $\overline{K}$ with domain $\omega$,
\[ K_s((r q_{s+1})^{\frac{1}{p_{b}}}) \cap \{ 0,\ldots,s \} = K_s \cap \{0,\ldots,s\}.\]
Let $K_{s+1} = K_s((r q_{s+1})^{\frac{1}{p_{b}}})$. As $K_s$ is an extension of $\mathbb{Q}$ of degree $p_{a_1} \cdots p_{a_s}$ and $p_b$ is coprime to this, for any two distinct primes $q$ and $q'$,
\[ K_s((rq)^{\frac{1}{p_{b}}}) \cap K_s((r q')^{\frac{1}{p_{b}}}) = K_s \text{ or } K_s((rq)^{\frac{1}{p_{b}}}) = K_s((r q')^{\frac{1}{p_{b}}}).\]
Thus we can find a $q_{s+1}$ as desired. Extend $v_s$ to the unique valuation $v_{s+1}$ on $K_{s+1}$. Let $(K,v) = \bigcup_s (K_s,v_s)$. Note that $K$ has a splitting algorithm: to decide whether a give $s \in \overline{K}$ is in $K$, one can simply check whether $s \in K_s$. Also, $v$ is the unique extension of the $r$-adic valuation from $\mathbb{Q}$ to $K$.

We claim that if $a \in 0'$, then the valuation $v$ on $K$ has more than one extension to $K(r^{\frac{1}{p_a}})$, and if $a \notin 0'$, then $v$ has a unique extension to $K(r^{\frac{1}{p_a}})$.

First suppose that $a$ enters $0'$ at stage $s$. Then we have a tower of extensions
\[ \mathbb{Q} \subset \mathbb{Q}(1 + q_{s}^{\frac{1}{p_{a}}}) \subset K(r^{\frac{1}{p_{a}}}) .\]
Note that $1 + q_{s}^{\frac{1}{p_{a}}}$ has minimal polynomial
\[ (x-1)^{p_a} - q_s = \binom{p_a}{0}x^{p_a}-\binom{p_a}{1}x^{p_a-1}+\binom{p_a}{2}x^{p_a-2}+\cdots\pm\binom{p_a}{p_a-1}x - (q_s - 1).\]
Since $q_s \equiv 1 \mod{r}$, $r \mid q_s-1$. Also, since $p_a \neq r$, $r \nmid \binom{p_a}{p_a-1} = p_a$, $r \nmid \binom{p_a}{0} = 1$, and $r \nmid \binom{p_a}{1} = p$. Thus, by looking at the Newton polygon of this minimal polynomial, we see that there are multiple distinct extensions of the $r$-adic valuation on $\mathbb{Q}$ to $\mathbb{Q}(1 + q_{s}^{\frac{1}{p_a}})$. So there are multiple distinct extensions of the $r$-adic valuation on $\mathbb{Q}$ to $K(r^{\frac{1}{p_{a}}})$. Since $v$ was the unique extension of the $r$-adic valuation to $K$, there are multiple extensions of $v$ to $K(r^{\frac{1}{p_{a}}})$.

Now suppose that $a \notin 0'$. Then consider the tower of extensions
\[ \mathbb{Q} \subset \mathbb{Q}(r^{\frac{1}{p_{a}}}) \subset K_1(r^{\frac{1}{p_{a}}}) \subset K_2(r^{\frac{1}{p_{a}}}) \subset \cdots.\]
Since $a \notin 0'$, each of these extensions has ramification index equal to its degree as a field extension. By the fundamental inequality, there is a unique extension of the valuation for each field extension.
\end{proof}

We can also embed every valued field into a computable presentation of its Henselization.

\begin{proposition}[Proposition 6 of \cite{Smith}]
Let $(K,v)$ be a computable valued field. There is a computable embedding of $K$ into a computable valued field $(L,w)$ such that $(L,w)$ is the Henselization of $K$.
\end{proposition}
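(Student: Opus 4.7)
The plan is to use Proposition~\ref{prop:embeds-acvf} to embed $(K,v)$ computably into a computable algebraically closed valued field $(\overline{K},w)$, and then to effectively carve out $K^h$ as a subfield of $\overline{K}$ by iteratively applying Hensel's lemma. Inside $\overline{K}$, the Henselization $K^h$ is the smallest Henselian intermediate field between $K$ and $\overline{K}$; equivalently, it is the subfield generated over $K$ by adjoining, for each Hensel pair $(f,a)$ (with $f \in \mc{O}[x]$ monic, $a \in \mc{O}$, and $v(f(a)) > 2v(f'(a))$), the unique root $b$ of $f$ satisfying $w(b-a) > v(f'(a))$, as in characterization (2) of Definition~\ref{def:Hensel}.

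To effectivize this, I would dovetail the following enumeration. Start with $L_0 \subseteq \overline{K}$ equal to the image of $K$. At each stage $s$, enumerate the pairs $(f,a)$ with $f \in \mc{O}_{L_s}[x]$ monic and $a \in \mc{O}_{L_s}$ such that $v(f(a)) > 2 v(f'(a))$; this test is decidable because $w$ is computable. For each such pair, locate the unique root $b$ of $f$ in $\overline{K}$ with $w(b-a) > v(f'(a))$: since $\overline{K}$ is computable and algebraically closed, the finite list of roots of $f$ in $\overline{K}$ can be produced, and exactly one of them satisfies the inequality by Hensel's lemma applied inside $\overline{K}$. Add this $b$ to $L_{s+1}$ and take $L = \bigcup_s L_s$. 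To realize $L$ as a bona fide computable presentation, I would apply the standard device of placing the c.e.\ subset of $\overline{K}$ we have enumerated in computable bijection with $\omega$ and transporting the field operations and valuation back; the embedding $K \hookrightarrow \overline{K}$ then factors computably through $L$.

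The remaining task is to verify $L = K^h$. The inclusion $L \subseteq K^h$ is immediate: since $K^h$ is Henselian and contains $K$, Hensel's lemma produces each adjoined root $b$ inside $K^h$, and the uniqueness clause forces $b \in K^h$. Conversely, $L$ is itself Henselian, because any Hensel pair $(f,a)$ with data in $L$ appears at some stage $L_s$, at which point the corresponding root is adjoined at stage $s+1$. Since $K^h$ is the unique minimal Henselian field between $K$ and $\overline{K}$, we conclude $L = K^h$. The main delicate point is that the construction is genuinely iterative---new Hensel pairs over later $L_s$ need not have arisen over $L_0$---so one must dovetail carefully, rather than restricting attention to Hensel polynomials over $K$ alone; but this is just bookkeeping, and the real content (existence and computability of the ambient $\overline{K}$, and the effective recognition and resolution of Hensel pairs) is already in hand.
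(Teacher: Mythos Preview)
Your proposal is correct and follows essentially the same approach as the paper: embed into a computable algebraically closed valued field via Proposition~\ref{prop:embeds-acvf}, then enumerate $K^h$ inside $\overline{K}$ by closing under applications of Hensel's lemma in the form (2) of Definition~\ref{def:Hensel}, exploiting the uniqueness of the root $b$ to make the search effective. The paper's proof is a terse three sentences; you have simply fleshed out the bookkeeping (the dovetailing over successive $L_s$, the passage from a c.e.\ subfield to a computable presentation, and the verification that $L = K^h$) that the paper leaves implicit.
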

\begin{proof}
It is enough to show that if $(K,v) \to (\overline{K},v)$ is an embedding of $K$ into a computable presentation of its algebraic closure, then we can enumerate $K^h$ in $\overline{K}$. We can close under applications of Hensel's lemma, say in the version (2) of Definition \ref{def:Hensel} above, to enumerate the Henselization of $K$. Note that the solutions in (2) are unique.
\end{proof}

\noindent Smith also showed that Henselizations are recursively unique \cite{Smith}.

Given an embedding of a valued field into its algebraic closure, we might want to decide which elements of the algebraic closure are in the Henselization, rather than just enumerating the elements of the Henselization. We show that this can be done for the Henselization of $\mathbb{Q}$ inside any fixed presentation of $\overline{\mathbb{Q}}$.

\begin{proposition}
Let $(\mathbb{Q},v)$ be a computable valued field with the $p$-adic valuation. Fix a Rabin embedding of $\mathbb{Q}$ into $\overline{\mathbb{Q}}$. Then the Hensilization $\mathbb{Q}^h \subseteq \overline{\mathbb{Q}}$ of $\mathbb{Q}$ is computable inside $\overline{\mathbb{Q}}$.
\end{proposition}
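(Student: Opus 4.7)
The plan is to use the characterization: for $a \in \overline{\mathbb{Q}}$, we have $a \in \mathbb{Q}^h$ if and only if the induced extension $(\mathbb{Q}(a), v|_{\mathbb{Q}(a)})/(\mathbb{Q}, v)$ is immediate, where $v|_{\mathbb{Q}(a)}$ denotes the restriction of the fixed valuation on $\overline{\mathbb{Q}}$. Once this criterion is in hand, the ramification index and residue degree of the restriction can be extracted from Lemma \ref{lem:find-extensions}, yielding a decision procedure.

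To justify the criterion, I would use that $(\mathbb{Q}, v)$ is finitely ramified, so by Theorem \ref{thm:alg-max} the Henselian field $\mathbb{Q}^h$ is algebraically maximal, i.e., has no proper separable immediate algebraic extension (separability is automatic since we are in characteristic $0$). Moreover, the Henselization of a finitely ramified valued field is immediate over its base, so $\mathbb{Q}^h$ has value group $\mathbb{Z}$ and residue field $\mathbb{F}_p$. If $a \in \mathbb{Q}^h$, then $\mathbb{Q}(a)$ sits between $\mathbb{Q}$ and $\mathbb{Q}^h$ and inherits the same value group and residue field, so $(\mathbb{Q}(a), v|_{\mathbb{Q}(a)})/(\mathbb{Q}, v)$ is immediate. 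Conversely, if this extension is immediate, then the Henselization of $(\mathbb{Q}(a), v|_{\mathbb{Q}(a)})$ inside $\overline{\mathbb{Q}}$, which equals the compositum $M = \mathbb{Q}(a) \cdot \mathbb{Q}^h$, is immediate over $\mathbb{Q}$ (immediateness is transitive), hence also immediate over $\mathbb{Q}^h$. Algebraic maximality of $\mathbb{Q}^h$ then forces $M = \mathbb{Q}^h$, so $a \in \mathbb{Q}^h$.

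For the decision procedure: given $a \in \overline{\mathbb{Q}}$, compute its minimal polynomial over $\mathbb{Q}$. Apply Lemma \ref{lem:find-extensions} with $K = \mathbb{Q}$ to obtain the complete list $w_1, \ldots, w_r$ of distinct extensions of $v$ to $\mathbb{Q}(a)$, together with their exact ramification indices $e_i$ and residue degrees $f_i$. The valuation $v|_{\mathbb{Q}(a)}$ arising from the fixed Rabin embedding is one of the $w_i$; to identify which, enumerate elements of $\mathbb{Q}(a)$ and, for each $i$, reject $w_i$ as soon as $w_i(b) \neq v(b)$ for some enumerated $b$. Since distinct valuations disagree on some element, this process eventually rejects $r - 1$ of the indices, and the surviving index $i^*$ satisfies $w_{i^*} = v|_{\mathbb{Q}(a)}$. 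Output $a \in \mathbb{Q}^h$ if and only if $e_{i^*} = f_{i^*} = 1$.

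The main obstacle is verifying the characterization; the nontrivial direction requires combining algebraic maximality (Theorem \ref{thm:alg-max}) with the fact that the Henselization of $\mathbb{Q}(a)$ in $\overline{\mathbb{Q}}$ is the compositum $\mathbb{Q}(a) \cdot \mathbb{Q}^h$ and that immediateness passes through this compositum. Once the criterion is established, the effectivization is a routine combination of Lemma \ref{lem:find-extensions} with the computability of $v$ on $\overline{\mathbb{Q}}$.
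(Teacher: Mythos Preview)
Your proof is correct and follows essentially the same approach as the paper: both establish the criterion that $a \in \mathbb{Q}^h$ if and only if $(\mathbb{Q}(a),v|_{\mathbb{Q}(a)})$ is an immediate extension of $(\mathbb{Q},v)$, justify it via algebraic maximality of $\mathbb{Q}^h$ (Theorem~\ref{thm:alg-max}) and transitivity of immediateness through $\mathbb{Q}(a)^h$, and then invoke Lemma~\ref{lem:find-extensions} to decide immediateness. You are slightly more explicit than the paper in two places---identifying $\mathbb{Q}(a)^h$ as the compositum $\mathbb{Q}(a)\cdot\mathbb{Q}^h$, and spelling out how to pick out the induced valuation among the $w_i$ returned by the lemma---but these are elaborations of the same argument rather than a different route.
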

\begin{proof}
Since $\mathbb{Q}$ is finitely ramified, it has value group $\mathbb{Z}$. By Theorem \ref{thm:alg-max}, the Henselization of $\mathbb{Q}$ is the smallest algebraically maximal valued field containing $\mathbb{Q}$; that is, the minimal extension of $\mathbb{Q}$ with no immediate extensions. The Hensilization of $\mathbb{Q}$ is unique inside the fixed presentation of $\overline{\mathbb{Q}}$. 

Given $a$, $a \in \mathbb{Q}^h$ if and only if $\mathbb{Q}(a)$, together with the induced valuation $v$ coming from the valuation on $\overline{\mathbb{Q}}$, is an immediate extension of $\mathbb{Q}$. If $a \in \mathbb{Q}^h$, then $\mathbb{Q}(a)$ is an immediate extension of $\mathbb{Q}$. On the other hand, if $\mathbb{Q}(a)$ is an immediate extension of $\mathbb{Q}$, since $\mathbb{Q}(a)^h$ is an immediate extension of $\mathbb{Q}(a)$ we know that $\mathbb{Q}(a)^h$ is an immediate extension of $\mathbb{Q}^h$. Since $\mathbb{Q}^h$ has no proper immediate extensions, $\mathbb{Q}(a)^h = \mathbb{Q}^h$. Thus $a \in \mathbb{Q}^h$. 

To check whether $\mathbb{Q}(a)$ is an immediate extension of $\mathbb{Q}$, we need to compute the ramification index and residue degree of the extension of $v$ to $\mathbb{Q}(a)$. We can do this uniformly in $a$ by Lemma \ref{lem:find-extensions}.  
\end{proof}

This lemma is not true for an arbitrary algebraic valued field. The following proposition shows that there is a computable algebraic valued field $(K,v)$, with a splitting algorithm, so that we cannot decide whether or not an element $a$ is in the Henselization of $K$. As a consequence, there is no computable way to decide, from a minimal polynomial of $a$ over $K$, whether or not $a$ is in the Henselization of $K$.

\begin{proposition}
There is a computable algebraic valued field $(K,v)$ with a splitting algorithm whose Henselization is not computable as a subset of $\overline{\mathbb{Q}}$.
\end{proposition}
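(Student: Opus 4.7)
The plan is to modify the construction of Proposition~\ref{prop:no-comp-ext} so that, for a fixed choice of root $b_a := r^{1/p_a} \in \overline{K}$, we arrange $b_a \in K^h$ if and only if $a \in 0'$. Since $a \mapsto b_a$ will be a computable function into $\overline{K}$, this immediately gives that $K^h$ is not a computable subset of $\overline{K}$.

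First, use Proposition~\ref{prop:embeds-acvf} to fix a computable algebraically closed valued field $(\overline{K},\bar v)$ extending $(\mathbb{Q},v_r)$, where $v_r$ is the $r$-adic valuation for an odd prime $r$. The Henselization $K^h \subseteq \overline{K}$ is then a well-defined subfield (the decomposition field of $\bar v$ over $K$). Fix once and for all, for each $a$, a specific $p_a$-th root $b_a$ of $r$ in $\overline{K}$. Proceed exactly as in Proposition~\ref{prop:no-comp-ext}, except that at each stage $s$ when $a = a_s$ enters $0'$, after choosing a fresh prime $q_s \equiv 1 \pmod r$ by the same linear-disjointness argument, we adjoin the particular root $d_a := b_a \cdot \tilde c_s$, where $\tilde c_s \in \overline{K}$ is the unique $p_a$-th root of $q_s$ with $\bar v$-residue $1$. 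Since $d_a^{p_a} = rq_s$, this is a legal adjunction for Proposition~\ref{prop:no-comp-ext}, and $K$ is a computable algebraic extension of $\mathbb{Q}$ with a splitting algorithm, carrying the unique extension $v = \bar v|_K$ of $v_r$.

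For the verification, first suppose $a \in 0'$ enters at stage $s$. Because $q_s \equiv 1 \pmod r$ and $p_a \neq r$, Hensel's lemma (item (2) of Definition~\ref{def:Hensel}) applied in $K^h$ to $f(x) = x^{p_a} - q_s$ at the approximate root $1$ yields a unique $\beta \in K^h$ with $\beta^{p_a} = q_s$ and $\bar\beta = 1$; by uniqueness of residue-$1$ roots in $\overline{K}$, $\beta = \tilde c_s$. Hence $\tilde c_s \in K^h$, and since $d_a \in K \subseteq K^h$, we obtain $b_a = d_a/\tilde c_s \in K^h$. Now suppose $a \notin 0'$. Since the primes $p_{a_s}$ for $a_s \in 0'$ are all different from $p_a$, no finite subextension of $K$ has degree over $\mathbb{Q}$ divisible by $p_a$, which forces $b_a \notin K$ and $[K(b_a):K] = p_a$. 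Moreover, the value group $v(K)$ is generated by $\mathbb{Z}$ together with $\{1/p_{a_s} : a_s \in 0'\}$ and so does not contain $1/p_a$; hence any extension of $v$ to $K(b_a)$ has ramification index $p_a = [K(b_a):K]$, and the fundamental inequality (Theorem~\ref{thm:fund-ineq}) makes this extension unique. Thus $x^{p_a} - r$ remains irreducible over $K^h$, which forces $b_a \notin K^h$.

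The main subtlety is the compatibility of the freshness condition on $q_s$ (used to preserve the splitting algorithm) with the residue-$1$ condition used to encode $0'$ into $K^h$. These are compatible because the residue map on $\overline{K}$ is computable, so for any permissible $q_s$ the element $\tilde c_s$ can be identified effectively, while the freshness condition excludes only finitely many candidate primes $q \equiv 1 \pmod r$ at each stage. Combining the two cases, $\{a : b_a \in K^h\} = 0'$, so $K^h$ is not computable as a subset of $\overline{K}$.
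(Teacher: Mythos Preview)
Your proof is correct and follows the same strategy as the paper: build $K = \mathbb{Q}((rq_i)^{1/p_{a_i}} : a_i \in 0')$ with the unique extension of the $r$-adic valuation, and argue that a $p_a$-th root of $r$ lies in $K^h$ precisely when $a \in 0'$ (using Hensel's lemma for $q_i^{1/p_{a_i}}$ when $a \in 0'$, and the value-group obstruction when $a \notin 0'$). The one refinement you add is fixing the target root $b_a$ \emph{in advance} and then adjoining the specific element $b_a \tilde c_s$, so that $a \mapsto b_a$ becomes a direct many-one reduction of $0'$ to $K^h$. The paper does not bother with this: it simply notes that \emph{some} $p_a$-th root of $r$ lies in $K^h$ when $a \in 0'$ and \emph{none} does when $a \notin 0'$; since there are only $p_a$ roots to test against a hypothetically computable $K^h$, that already decides $0'$. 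Your version is slightly sharper but not materially different. One small comment: in your $a \notin 0'$ case, the detour through ``$x^{p_a}-r$ remains irreducible over $K^h$'' is unnecessary (and the implication from unique extendibility over $K$ is not fully argued); the paper's direct observation that $\bar v(b_a)=1/p_a \notin v(K^h)=v(K)$ already gives $b_a \notin K^h$.
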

\begin{proof}
Fix a prime $r$ and a computable list $p_1,p_2,\ldots$ of the primes not equal to $r$. In a similar way to Proposition \ref{prop:no-comp-ext}, construct a computable valued field
\[ (K,v) = \mathbb{Q}((r q_i)^{\frac{1}{p_i}} : i \in 0')\]
with a splitting algorithm. As before, for each $i$, $q_i \equiv 1 \mod{r}$. The primes $q_i$ do not necessarily form a computable sequence in $i$. The valuation $v$ is the unique extension of the $r$-adic valuation to $K$.

Then for each $i \in 0'$, $q_i^{\frac{1}{p_i}}$ is in the Henselization of $\mathbb{Q}$, and hence in the Henselization of $(K,v)$. This is because $1^{p_i} \equiv q \mod r$ but $p_i 1 ^{p_i - 1} \equiv p_i \not\equiv 0 \pmod r$. So $r^{\frac{1}{p_i}}$ is in the Henselization of $(K,v)$.

On the other hand, suppose that $i \notin 0'$. We will show that $r^{\frac{1}{p_i}}$ is not in the Henselization of $(K,v)$. Note that the value group of $K$ is $\mathbb{Z}\langle \frac{1}{p_j} : j \in 0' \rangle$. Then this is also the value group of the Henselization of $K$, and so $r^{\frac{1}{p_i}}$ is not in the Henselization. 
\end{proof}

We now come to the main result of this section. We showed above that we can embed a valued field $(K,v)$ into an algebraically closed valued field, constructing the algebraic closure $\overline{K}$ at the same as we construct the extension of the valuation. But what if we have a fixed embedding of $K$ into a presentation of its algebraic closure, and we want to extend the valuation $v$ to $\overline{K}$ via that particular embedding? Theorem 4 of \cite{Smith} shows that one cannot always do this.

If $\iota$ is an embedding of $K$ into $\overline{K}$, by an ($\iota$-)extension of the valuation $v$ to the field $\overline{K}$ we mean a valuation $w$ on $\overline{K}$ with $w \circ \iota = v$. The following theorem gives a necessary and sufficient condition for a valuation $v$ on an algebraic field $K$ to extend to every algebraic extension.

\begin{theorem}
Let $(K,v)$ be a computable algebraic valued field. Then the following are equivalent:
\begin{enumerate}
	\item for every computable embedding $\iota \colon K \to L$ of $K$ into a field $L$ algebraic over $K$, there is a computable extension of $v$ to a computable valuation $w$ on $L$,
	\item the \textit{Hensel irreducibility set}
\begin{align*}
H_{K} := \{ f = x^n + a_{n-1} x^{n-1} + a_{n-2} x^{n-2} + \cdots + a_0 \in \mc{O}_K[x] : \\ 
 f \text{ is irreducible over } K\text{, } v(a_{n-1}) = 0 \text{, and } v(a_{n-2}),\ldots,v(a_0) > 0 \}
\end{align*}
	of $(K,v)$ is computable.
\end{enumerate}
\end{theorem}

\noindent Note the relation between the set $H_K$ and (4) of Definition \ref{def:Hensel}. Indeed, Smith showed that given a Henselian computable field, and a fixed embedding in an algebraic closure, one can extend the valuation (see Proposition 5 of \cite{Smith}); our result can be seen as a significant generalization of this, as a Henselian field trivially has computable Hensel irreducibility set.

\begin{proof}
(2)$\Rightarrow$(1). Fix $\overline{\mathbb{Q}}$ a computable presentation of the algebraic closure of $\mathbb{Q}$, and $\iota$ an embedding of $K$ into $\overline{\mathbb{Q}}$. Using this embedding, we can view $K$ as a c.e.\ subset of $\overline{\mathbb{Q}}$. Begin by defining $w_0$ to be the $p$-adic valuation on $F_0 = \mathbb{Q}$.

We begin by showing that we can find a sequence
\[ F_0 = \mathbb{Q} \subseteq F_1 = F_0(a_0) \subseteq F_2 = F_1(a_1) \subseteq \cdots \]
of fields, such that each $F_s$ is a normal extension of $F_0$, and so that $\overline{\mathbb{Q}}$ is the union of these fields.
Given $F_s$ a finite normal extension of $\overline{\mathbb{Q}}$, and a splitting algorithm for $F_s$, $F_s$ is a computable subset of $\overline{\mathbb{Q}}$. Let $a$ be the first element of $\mathbb{Q}$ which is not in $F_s$. Search for an element $a_s$ such that $a \in F_i(a_s)$, and all of the conjugates of $a_s$ over $\mathbb{Q}$ are in $F_s(a_s)$. By Theorem \ref{Kronecker}, $F_s(a_s)$ has a splitting algorithm, so we can check this computably. Some such $a_s$ exists by the primitive element theorem. Then let $F_{s+1} = F_s(a_s)$. We have, uniformly in $s$, a splitting algorithm for $F_s$.

Suppose that we have defined $w_s$ on $F_s$, with the property that there is a common extension of $v$ and $w_s$ to $\overline{\mathbb{Q}}$. We will show how to extend $w_s$ to a valuation $w_{s+1}$ on $F_{s+1}$ such that $w_s$ and $v$ have a common extension to $\overline{\mathbb{Q}}$.

By Lemma \ref{lem:find-extensions} we can find all of the extensions of $w_s$ to $F_{s+1}$. If there is only one extension, let $w_{s+1}$ be this extension. Otherwise, let $u_1,\ldots,u_m$ be the distinct valuations on $F_{s+1}$ extending the $p$-adic valuation on $\mathbb{Q}$.

For each $i$, we will search for evidence that $u_i$ is not compatible with $v$. If $u_i$ is not compatible with $v$, then (by K\"onig's Lemma, since there are only finitely many valuations on a finitely generated algebraic extension of $\mathbb{Q}$) there is some finitely generated subfield $K'$ of $K$ such that $v \mid K'$ and $u_i$ are not compatible on $K' F_{s+1}$. For each $K'$, $K' F_{s+1}$ is a finite degree extension of $\mathbb{Q}$, and so by Lemma \ref{lem:find-extensions} we can find all of the valuations on $K' F_{s+1}$. If $u_i$ and $v  \mid K'$ do not have a common extension to $K' F_{s+1}$, then every valuation on $K' F_{s+1}$ will differ from either $u_i$ or $v  \mid K'$ when applied to some element. So if $u_i$ and $v$ are not compatible, we will discover this in a c.e.\ way.

On the other hand, using Theorem \ref{thm:distinguish} with $a_i = 1$ and $a_j = 0$ for $i \neq j$, there is $\beta_i \in F_{s+1}$ such that $u_i(\beta_i - 1) > 0$ and $u_j(\beta_i) > 0$ for $j \neq i$. Note that $u_i(\beta_i) = 0$. We can choose such a $\beta_i$ for each $i$. We claim that if $u_i$ and $v$ have a common extension, say $w$, to $K F_{s+1}$, then we can eventually find the minimal polynomial of $\beta_i$ over $K$. Let
\[ f_i = x^n + a_{n-1}x^{n-1} + a_{n-2}x^{n-2} + \cdots + a_0 \]
be the minimal polynomial of $\beta_i$ over $K$. Let $\beta_i = \beta_i^1,\ldots,\beta_i^n$ be the conjugates of $\beta_i$ over $K$. Since $F_{s+1}$ is a normal extension of $\mathbb{Q}$, and $\beta_i \in F_{s+1}$, each of these conjugates is in $F_{s+1}$. Each of $\beta_i^2,\ldots,\beta_i^n$ is a conjugate of $\beta_i$ over $\mathbb{Q}$. Among $(u_j)_{i \neq j}$ are the conjugates of the valuation $u_i$ over $\mathbb{Q}$. Since $u_j(\beta_i) > 0$ for each $i \neq j$, $u_i(\beta_i^2),\ldots,u_i(\beta_i^n) > 0$.
Then
\[ f_i = (x - \beta_i^1)(x-\beta_i^2) \cdots (x-\beta_i^n) \]
and so 
\[ v(a_{n-1}) = w(a_{n-1}) = w(-\beta_i^1-\cdots-\beta_i^n) = w(-\beta_i) = u_i(\beta_i) = 0. \]
For $k = 0,\ldots,n-2$, we can write $a_k$ as a sum of products of $\beta_i^1,\ldots,\beta_i^n$, where each term of the sum has at least two factors, and each of $\beta_i^1,\ldots,\beta_i^n$ shows up at most once in each product. Thus the $w$-value of each term is strictly positive, and so $v(a_k) = w(a_k) > 0$.
To find the minimal polynomial of $\beta_i$ over $K$, we search for an irreducible polynomial 
\[ f = x^n + a_{n-1}x^{n-1} + a_{n-2}x^{n-2} + \cdots + a_0 \]
with $f(\beta_i) = 0$, $v(a_{n-1}) = 0$, and $v(a_{n-2}),\ldots,v(a_0) > 0$. Note that we can check whether such a polynomial is irreducible. We can perform this search whether or not $u_i$ and $v$ have a common extension to $K F_{s+1}$. If $u_i$ and $v$ do have a common extension to $K F_{s+1}$, then we will eventually find the minimal polynomial of $\beta_i$. We can also find all of the conjugates $\beta_i = \beta_i^1,\ldots,\beta_i^n$ of $\beta_i$ over $K$.

Suppose that $u_i$ and $v$ have a common extension to $K F_i (a)$, and $u_j$ and $v$ have a common extension to $K F_i(a)$. Since any two extensions of $v$ to $K F_{s+1}$ are conjugate over $K$, $u_i$ and $u_j$ are conjugate over $K$. Thus $u_j(\beta_i^k) = 0$ for some $k$.

On the other hand, suppose that $u_j(\beta_i^k) = 0$ for some $k$. Note that $u_j$ is conjugate over $K$ to a valuation $u_j'$ with $u_j'(\beta_i) = 0$. By choice of $\beta_i$, $u_j' = u_i$. Thus $u_i$ and $u_j$ are conjugate over $K$. Then $u_i$ and $v$ have a common extension to $K F_{s+1}$ if and only if $u_j$ and $v$ do.

Eventually, we will find, for some $i$, the minimal polynomial
\[ f = x^n + a_{n-1}x^{n-1} + a_{n-2}x^{n-2} + \cdots + a_0 \]
of $\beta_i$ over $K$, and conjugates $\beta_i = \beta_i^1,\ldots,\beta_i^n$ of $\beta_i$ over $K$.
Some of the $u_j$, for $j \neq i$, will be found to be incompatible with $v$. The rest of the $u_j$ will have $u_j(\beta_i^k) = 0$ for some $k$. Since at least one of the $u_j$ has a common extension with $v$ to $K F_{s+1}$, it must be that $u_i$ and all of the $u_j$ with $u_j(\beta_i^k) = 0$ have such an extension. In particular, $u_i$ and $v$ have a common extension to $K F_{s+1}$. Take $w_{s+1} = u_i$.

\vspace{5pt}

For (1)$\Rightarrow$(2), let $\overline{\mathbb{Q}}$ be a computable presentation of the algebraic closure of $\mathbb{Q}$. Let $a_0,a_1,a_2,\ldots$ enumerate the elements of $K$. We will define, at stage $s + 1$, an embedding $\iota_{s+1} \colon \mathbb{Q}(a_0,\ldots,a_s) \to \overline{\mathbb{Q}}$ such that $\iota_0 \subseteq \iota_1 \subseteq \iota_2 \subseteq \cdots$. Then $\iota = \bigcup_s \iota_s$ will be an embedding of $K$ into $\overline{\mathbb{Q}}$. We will attempt to meet the following requirements:
\[ R_i : \varphi_i \text{ is not a valuation on } \overline{\mathbb{Q}} \text{ extending } v.\]
We know, by assumption, that we must fail to satisfy this requirement for some $i$, as there is a computable extension of $v$. We will use this failure to prove that $H_K$ is computable. The strategy is similar to that used in \cite{HTMelnikovMiller15}.

\vspace{5pt}

\noindent\textit{Construction.}

\vspace{5pt}

Begin with $\iota_0 \colon \mathbb{Q} \to \overline{\mathbb{Q}}$ the unique embedding. As stage $s+1$, we have already defined $\iota_{s} \colon \mathbb{Q}(a_0,\ldots,a_{s-1}) \to \overline{\mathbb{Q}}$. We must define $\iota_{s+1}$ on $\mathbb{Q}(a_0,\ldots,a_s)$.

Let $i \in \omega$ be least, if it exists, such that $R_i$ is not yet satisfied and there is a polynomial $f = x^n + b_{n-1} x^{n-1} + b_{n-2}x^{n-2} + \cdots + b_0 \in \mathbb{Q}(a_0,\ldots,a_{s-1})[x]$ with:
\begin{enumerate}
	\item $v(b_{n-1}) = 0$,
	\item $v(b_{n-2}),\ldots,v(a_0) > 0$, 
	\item $f$ is irreducible over $\mathbb{Q}(a_0,\ldots,a_{s-1})$,
	\item $f$ splits over $\mathbb{Q}(a_0,\ldots,a_{s})$, and
	\item for $c_1,\ldots,c_m$ the solutions of $\iota(f)$ in $\overline{\mathbb{Q}}$, $\varphi_{i,s}(c_j)$ is defined for each $j$.
\end{enumerate}
Since we have splitting algorithms for the finite extensions $\mathbb{Q}(a_0,\ldots,a_{s-1})$ and $\mathbb{Q}(a_0,\ldots,a_{s})$, we can check whether this is the case for a particular $f$. Since $\varphi_{i,s}$ converges for only finitely many inputs, there are only finitely many such $f$ to consider.

As the $b_i$ are symmetric functions in the roots of $f$, for any valuation of $\overline{\mathbb{Q}}$, all of the roots of $f$ have valuation $\geq 0$, and exactly one root of $f$ has valuation exactly zero. Suppose that $c_1,\ldots,c_r$ are the solutions of $\iota(f)$ with valuation $\leq 0$; note that there is at least one such solution, as otherwise we would have $a_{n-1} > 0$ since it is a sum of products of elements with valuation $> 0$. Then $b_{n-r}$ is the sum of the products of $r$ of the solutions of $\iota(f)$, and $c_1 \cdots c_r$ has (strictly) the least valuation among these; then $v(b_{n-r}) = v(c_1) + \cdots + v(c_r) \leq 0$. Hence $r = 1$, and $v(c_1) = v(b_{n-1}) = 0$. Without loss of generality, let $c_1,\ldots,c_m$ be the solutions of $\iota(f)$, with $\varphi_i(c_1) = 0$ and $\varphi_i(c_2),\ldots,\varphi_i(c_m) > 0$. (Note that if the valuations of the $c_i$ are different than this, then $\varphi_i$ is not a valuation of $\overline{\mathbb{Q}}$ $\iota$-extending $v$. Thus $R_i$ is satisfied.)

Now $f$ splits over $\mathbb{Q}(a_0,\ldots,a_s)$, say $f = g_1 \cdots g_\ell$ with $g_1,\ldots,g_\ell$ irreducible over $\mathbb{Q}(a_0,\ldots,a_s)$. Given the valuation $v$ on $\mathbb{Q}(a_0,\ldots,a_s) \subseteq K$, there is exactly one $j$ for which $g_j$ can have a solution with valuation $0$ with respect to a valuation extending $v$; we can find such a $j$ computably by looking at the values of the coefficients of the $g_j$. Without loss of generality, let $j = 1$. Note also that $g_1,\ldots,g_\ell$ are conjugate over $\mathbb{Q}(a_0,\ldots,a_{s-1})$. Thus, we can extend $\iota_s$ to $\iota_{s+1} \colon \mathbb{Q}(a_0,\ldots,a_s) \to \overline{\mathbb{Q}}$ such that $c_1$ is not a solution of $\iota(g_1)$. Then if $w = \varphi_i$ is a valuation on $\overline{\mathbb{Q}}$ $\iota$-extending $v$, $w(c_1) = 0$ and so $c_1$ must be a root of $\iota(g_1)$; but this is not the case, and so $w = \varphi_i$ is not a valuation on $\overline{\mathbb{Q}}$ $\iota$-extending $v$. Thus $R_i$ is satisfied.

\vspace{5pt}

\noindent\textit{End construction.}

\vspace{5pt}

We built an embedding $\iota = \bigcup_s \iota_s$ of $K$ into $\overline{\mathbb{Q}}$. By assumption, there is a computable valuation $w$ on $\overline{\mathbb{Q}}$ extending the valuation $v$ on $K$. Let $w$ be given by $\varphi_i$. Given a polynomial $f \in K[x]$, with $f = x^n + b_{n-1} x^{n-1} + b_{n-2}x^{n-2} + \cdots + b_0$ where $v(b_{n-1}) = 0$ and $v(b_{n-2}),\ldots,v(b_0) > 0$, let $c_1,\ldots,c_m$ be the solutions of $\iota(f)$ in $\overline{\mathbb{Q}}$. Let $t$ be a stage such that:
\begin{enumerate}
	\item no $R_j$, for $j < i$, acts after stage $t$,
	\item $\varphi_{i,t}(c_j)$ is defined for each $j$,
	\item $f \in \mathbb{Q}(a_0,\ldots,a_{t-1})[x]$.
\end{enumerate}
Note that (1) is independent of $f$, and depends only on the stage $i$. The following claim will finish the proof.

\begin{claim}
$f$ is irreducible over $K$ if and only if $f$ is irreducible over $\mathbb{Q}(a_0,\ldots,a_{t})$.
\end{claim}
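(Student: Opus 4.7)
The plan is to handle the two directions of the biconditional separately, and one of them will be essentially trivial. If $f$ is irreducible over $K$, then because $\mathbb{Q}(a_0,\ldots,a_t) \subseteq K$, any factorization of $f$ over $\mathbb{Q}(a_0,\ldots,a_t)$ would automatically be a factorization over $K$; hence $f$ is irreducible over $\mathbb{Q}(a_0,\ldots,a_t)$, giving this direction for free.

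For the converse, I will suppose $f$ is irreducible over $\mathbb{Q}(a_0,\ldots,a_t)$ and, for contradiction, that $f$ is reducible over $K = \bigcup_s \mathbb{Q}(a_0,\ldots,a_s)$. The strategy is to locate a stage at which the construction would have been compelled to act on $R_i$ or an earlier requirement, and then derive a contradiction from either possibility. Explicitly, I would take $s$ to be the least index for which $f$ splits over $\mathbb{Q}(a_0,\ldots,a_s)$; since $f$ is irreducible over $\mathbb{Q}(a_0,\ldots,a_t)$, this forces $s \geq t+1$, so stage $s+1$ occurs strictly after stage $t$.

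Next, I would verify that at stage $s+1$ the polynomial $f$ is a valid witness for requirement $R_i$, i.e., satisfies clauses (1)--(5) of the construction. Clauses (1) and (2) are just the hypotheses on the coefficients of $f$; clause (3), that $f$ is irreducible over $\mathbb{Q}(a_0,\ldots,a_{s-1})$, follows from the minimality of $s$; clause (4) is the defining property of $s$. For clause (5), the crucial observation is that since $f \in \mathbb{Q}(a_0,\ldots,a_{t-1})[x]$, the image $\iota(f)$ is already determined as $\iota_t(f)$, so the roots $c_1,\ldots,c_m$ of $\iota(f)$ in $\overline{\mathbb{Q}}$ are fixed from stage $t$ onward, and by the choice of $t$ each $\varphi_{i,t}(c_j)$ --- hence each $\varphi_{i,s+1}(c_j)$ --- is defined.

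Once $R_i$ is eligible to be acted on at stage $s+1$, the construction acts on the least $i' \leq i$ for which some witness exists and $R_{i'}$ is not yet satisfied. If $i' < i$, this contradicts clause (1) in the choice of $t$, which forbade any $R_j$ with $j < i$ from acting after stage $t$. If $i' = i$, then by the argument already given in the construction the action successfully satisfies $R_i$, contradicting the standing assumption that $\varphi_i$ is a valuation on $\overline{\mathbb{Q}}$ that $\iota$-extends $v$. Either case gives a contradiction, so $f$ must be irreducible over $K$. The main obstacle will be tracking the interplay between $t$ and $s$; once one recognizes that the three defining properties of $t$ are precisely what is needed to certify clause (5) at any later stage, the argument falls into place.
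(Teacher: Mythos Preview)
Your proposal is correct and follows essentially the same approach as the paper's own proof: the forward direction is immediate from $\mathbb{Q}(a_0,\ldots,a_t)\subseteq K$, and for the converse you pick the least $s>t$ at which $f$ splits, verify that $f$ witnesses eligibility of $R_i$ at stage $s+1$, and derive a contradiction either from an earlier requirement acting (against condition (1) on $t$) or from $R_i$ being satisfied (against $\varphi_i$ being a valuation $\iota$-extending $v$). The only cosmetic slip is that the construction at stage $s+1$ checks $\varphi_{i,s}$ rather than $\varphi_{i,s+1}$, but since $s\geq t$ your argument goes through unchanged.
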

\begin{proof}
The left to right direction is obvious. So suppose that $f$ is irreducible over $\mathbb{Q}(a_0,\ldots,a_{t})$. Then suppose that $f$ is not irreducible over $K$. Then $f$ splits over $\mathbb{Q}(a_0,\ldots,a_{s})$ for some least $s > t$. Then, by choice of $t$, in the construction we satisfy the requirement $R_i$ at stage $s+1$. But then $w$ does not extend $v$, a contradiction. So $f$ is irreducible over $K$.
\end{proof}

Given $f$, we can compute $t$ as required, and then to check whether $f$ is irreducible over $K$ it suffices to check whether it is irreducible over $\mathbb{Q}(s_0,\ldots,s_{t})$. Since this is a finite algebraic extension of $\mathbb{Q}$, we have a splitting algorithm for this field.
\end{proof}

\section{\texorpdfstring{$p$-adic closures}{p-adic closures}}

It was easy to see by an effective Henkin construction in Proposition \ref{prop:embeds-acvf} that every valued field embeds effectively into a computable algebraically closed valued field. The same argument does not work to show that every computable formally $p$-adic field embeds effectively into a computable $p$-adic closure, because the theory $\pCF$ is not the model completion of formally $p$-adic fields: if $K$ is a $p$-adic field, the elementary diagram of $K$ together with the theory $\pCF$ is not complete. Indeed, there is a computable formally $p$-adic field which does not computably embed into a $p$-adic closure. This uses ideas from the proof that a formally $p$-adic field whose value group is not a $\mathbb{Z}$-group embeds into two non-isomorphic $p$-adic closures (Theorem 3.2 of \cite{PrestelRoquette84}).

\begin{theorem}
There is a computable formally $p$-adic field which does not computably embed into a $p$-adic closure.
\end{theorem}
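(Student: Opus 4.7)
The plan is to build a computable formally $p$-adic field $K = \bigcup_s K_s$ as a tower of finite extensions of $\mathbb{Q}$ (with the $p$-adic valuation at the bottom) that diagonalizes against each candidate pair $(L_e, \iota_e)$ in which $L_e$ is a computable valued field and $\iota_e \colon K \to L_e$ a computable embedding. I will keep the value group in the form $\Gamma_s = \Lambda_s \oplus_{\mathrm{lex}} \mathbb{Z}\cdot v(p)$, with $v(p)$ the smallest positive element and $\Lambda_s$ a free abelian ordered group on ``infinite'' generators (each strictly greater than every multiple of $v(p)$); every element I adjoin will reduce to $0$ and have value in the $\Lambda$-direction, so that $K$ stays formally $p$-adic throughout.

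The requirement $R_e$ asserts that $(L_e,\iota_e)$ is not a computable $p$-adic closure of $K$ with a computable embedding. The observation driving the diagonalization is that in any $p$-adic closure the value group is a $\mathbb{Z}$-group, so for every $\alpha \in \Gamma_{L_e}$ there is a \emph{unique} $r^*\in\{0,\ldots,p-1\}$ with $\alpha + r^* v_{L_e}(p) \in p\,\Gamma_{L_e}$. The strategy for $R_e$ therefore proceeds in two phases. First, it adjoins a fresh transcendental $t_e$, declaring $v(t_e)$ to be a new top generator of $\Lambda$. Second, it waits for $\varphi_{e,s}$ to reveal this unique $r^*_e$ for $\alpha = v_{L_e}(\iota_e(t_e))$, then picks any $r'_e \in \{0,\ldots,p-1\}\setminus\{r^*_e\}$ and adjoins the $p$-th root $y_e := (t_e p^{r'_e})^{1/p}$.

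Several routine verifications are needed for the execution step: the polynomial $x^p - t_e p^{r'_e}$ is irreducible over $K_s$ because $v(t_e) + r'_e v(p)$ is not divisible by $p$ in $\Gamma_s$ (the summand $\mathbb{Z}\cdot v(t_e)$ is a free direct factor of $\Lambda_s$ and $0\le r'_e < p$); the valuation extends uniquely to $K_s(y_e)$, totally ramified of degree $p$, by the fundamental equality; the residue field stays $\mathbb{F}_p$ since $\overline{y_e}=0$; and $v(p)$ remains the smallest positive element because $v(y_e)$ lies in the infinite $\Lambda$-direction. The key algebraic fact is then that in the enlarged $\Gamma_K$, the element $v(t_e) + r v(p)$ is divisible by $p$ for $r\in\{0,\ldots,p-1\}$ if and only if $r = r'_e$ (otherwise $(r-r'_e)v(p) \in p\,\Gamma_K$ would force $p \mid r - r'_e$). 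So if $L_e$ were actually a $p$-adic closure of $K$ extending $\iota_e$, it would have to exhibit both $r^*_e$ (from its commitment) and $r'_e$ (forced by the new $y_e \in K$), yielding $(r^*_e - r'_e)v_{L_e}(p) \in p\,\Gamma_{L_e}$, impossible in a $\mathbb{Z}$-group. If on the other hand the wait for $r^*_e$ never terminates, then $\varphi_e$ cannot be a computable $p$-adic closure together with a computable embedding of $K$, and $R_e$ is satisfied by default.

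The main obstacle will be bookkeeping rather than injury: each $R_e$ acts on its private transcendental $t_e$ and introduces a single $y_e$ whose value is independent in $\Lambda$ of every other strategy's generators, so no strategy ever disturbs another's commitments, and a straightforward priority listing suffices. The subtle part is presenting $K_s$, $\Gamma_s$, the field operations, and the valuation uniformly computably, and recognizing c.e.\ when $\varphi_{e,s}$ has produced $\delta \in \Gamma_{L_e,s}$ and $r \in \{0,\ldots,p-1\}$ with $p\delta = v_{L_e}(\iota_e(t_e)) + r\, v_{L_e}(p)$. Once these computability details are arranged, the verification that every $R_e$ is met, and therefore that the resulting computable formally $p$-adic field $K$ admits no computable embedding into any computable $p$-adic closure, follows from the diagonalization above.
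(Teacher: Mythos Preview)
Your proposal is correct and rests on the same diagonalization idea as the paper: wait for the candidate $p$-adic closure to commit to the residue class (modulo some prime) of the valuation of a distinguished transcendental, then adjoin a root that forces a \emph{different} residue class, so that the closure's value group cannot be a $\mathbb{Z}$-group. The bookkeeping differs. The paper works with a single transcendental $t$ and assigns the $i$-th prime $q_i$ to requirement $R_i$, adjoining a $q_i$-th root of $p^{m+1}t$; coprimality of the $q_i$ is what keeps the extensions for different requirements linearly disjoint, and the resulting field has transcendence degree~$1$. You instead give each $R_e$ its own transcendental $t_e$ and always take $p$-th roots; this makes the independence of the requirements transparent (each lives on its own free generator of the coarse value group), at the cost of producing a field of infinite transcendence degree. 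Two small corrections to your write-up: your appeal to the fundamental \emph{equality} for the extension $K_s(y_e)/K_s$ is not literally available since $\Gamma_s$ is not $\mathbb{Z}$, but the fundamental \emph{inequality} already gives what you need (as $v(t_e p^{r'_e})\notin p\Gamma_s$ forces ramification index $p$, hence a unique extension with residue degree~$1$, which is also the correct reason the residue field stays $\mathbb{F}_p$); and once some $y_{e'}$ with $r'_{e'}\neq 0$ has been adjoined, $\Gamma_s$ is no longer literally of the form $\Lambda_s\oplus_{\mathrm{lex}}\mathbb{Z}$ for your original free $\Lambda_s$, though $\mathbb{Z}\cdot v(p)$ remains the minimal convex subgroup and $\Gamma_s$ stays free abelian on $\{v(p)\}\cup\{v(y_{e'})\}\cup\{v(t_{e''})\}$, which is all your argument actually uses.
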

\begin{proof}
We will construct a formally $p$-adic field $E$ by diagonalizing against computable embeddings $f_i$ into $p$-adic closures $(K_i,v_i)$. Let $q_i$ be the $i$th prime. Begin at stage $0$ with $E_0 = \mathbb{Q}(t)$ a transcendental extension of $\mathbb{Q}$, together with the valuation $v$ with $v(t) > \mathbb{Z} = v(\mathbb{Q})$. At stage $s+1$, we will have built $E_0 \subseteq E_1 \subseteq \cdots \subseteq E_s$ a chain of embeddings of computable valued fields, with each extension algebraic. Let $i < s$ be the least $i$ against which we have not yet diagonalized such that at stage $s$ there is an element $a$ among the first $s$ elements of $K_i$ with $q_i \cdot v(a) = v(p^m f_i(t))$ for some $0 \leq m < q_i$ (i.e., $f_{i,s}(t)$ converges, and enough of the diagram of $K_i$ converges to decide that $q_i \cdot v(a) = v(p^m f_i(t))$). We will diagonalize against this $i$. Let $E_{s+1} = E_s(b)$, where $b$ is such that $b^{q_i} = p^{m+1} f_i(t)$. Extend the valuation to $E_{s+1}$ (again, by abuse of notation, calling it $v$).

Now $E_s$ will be an extension of degree $q_{i_1} \cdots q_{i_n}$ of $E_0$, where $i_1,\ldots,i_n$ are the requirements which we have already diagonalized against. $E_0(b)$ is an extension of $E_0$ of degree $q_i$, and so since $q_i$ is coprime to $q_{i_1},\ldots,q_{i_n}$, $E_{s+1}$ is an extension of $E_s$ of degree $q_i$.

The value group of $E_0$ is $\mathbb{Z}\langle r\rangle$, where $r = v(t) > \mathbb{Z}$. Then the value group of $E_s$ will be
\[ v(E_s) = \mathbb{Z}\langle r ,\frac{r + m_1+1}{q_{i_1}},\ldots,\frac{r + m_n+1}{q_{i_n}} \rangle.\]
The value group of $E_{s+1}$ will contain
\[ G = \mathbb{Z}\langle r,\frac{r + m_1 + 1}{q_{i_1}},\ldots,\frac{r + m_n + 1}{q_{i_n}},\frac{r + m + 1}{q_i} \rangle.\]
Since $q_i$ is coprime to $q_{i_1},\ldots,q_{i_n}$, $v(E_s)$ is a subgroup of $G$ index $q_i$. By the fundamental inequality, $G$ is the value group of $E_{s+1}$, and the residue degree is $1$. Note also that since $q_{i_1},\ldots,q_{i_n},q_i$ are coprime, $1 = v(p)$ is still the minimal element of the value group. So $E_s(b)$ is formally $p$-adic. The extension of $v$ from $E_s$ to $E_{s+1}$ is unique.

Let $E = \bigcup_i E_i$, with valuation $v$. Then $E$ is a formally $p$-adic field. Suppose towards a contradiction that $E$ computably embeds into a computable $p$-adic closure; let $i$ be an index such that the embedding is $f_i$ into the $p$-adic closure $(K_i,v_i)$. Since the value group of $K_i$ is a $\mathbb{Z}$-group, there is $\gamma \in \Gamma_{K_i}$ such that $q_i \gamma = v(f_i(t)) + m$ for some $0 \leq m < q_i$. Then, at some stage $s$ we have diagonalized against every $j < i$ which we will ever diagonalize against, there is $a$ among the first $s$ elements of $K_i$ with $q_i \cdot v(a) = v(p^m f_i(t))$ for some $0 \leq m < q_i$, $f_{i,s}(t)$ has converged, and enough of the diagram of $K_i$ has converged to decide that $q_i \cdot v(a) = v(p^m f_i(t))$. Then at stage $s + 1$, we will diagonalize against $K_i$ by putting into $E_{s+1}$ an element $b$ with $v(b^{q_i}) = v(t) + m + 1$. Then, in the value group of $K_i$, we have
\[ q_i \cdot v\left( \frac{f_i(b)}{a} \right) = q_i \cdot v(f_i(b)) - q_i \cdot v(a) = v(f_i(c)) - q_i \cdot v(a) = v(f_i(t)) + m + 1 - v(f_i(t)) - m = 1.\]
But then $q_i$ divides $1$ in the value group, and hence $K_i$ is not formally $p$-adic. So $K_i$ cannot be the $p$-adic closure of $E$.
\end{proof}

The problem with the field from the previous theorem which prevents us from embedding it into a $p$-adic closure is that we cannot decide, for a given element of the value group and $n \in \omega$, whether or not it is divisible by $n$. Theorem \ref{thm:div-extens} below will show that this is the only obstacle.

\begin{definition}
Let $G$ be a torsion-free abelian group. The \textit{dividing set} of $G$ is 
\[ \DIV(G) = \{ (x,n) \in G \times \omega : n \text{ divides } x \}. \]
\end{definition}

\noindent The set $\DIV(G)$ should be viewed as analogous to the splitting set for a field. The following lemma is the analogue of Theorem \ref{Kronecker} of Kronecker's.

\begin{lemma}\label{lem:ext-div}
Suppose that $G$ is a computable torsion-free abelian group with $\DIV(G)$ computable. Let $H = G \langle a \rangle$ be a computable group where $a$ is a new element with $n a = b \in G$. Then $\DIV(H)$ is computable uniformly in $\DIV(G)$, $n$, and $b$.
\end{lemma}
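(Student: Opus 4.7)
The plan is to reduce divisibility in $H$ to divisibility in $G$ by analyzing the single defining relation $na = b$. Write $H = (G \oplus \mathbb{Z}a)/\langle na-b\rangle$, so every element of $H$ has a representative $g + ka$ with $g \in G$ and $k \in \mathbb{Z}$. The question ``$m \mid g + ka$ in $H$?'' unfolds to the existence of $g' \in G$, $k' \in \mathbb{Z}$, and $j \in \mathbb{Z}$ such that, as elements of $G \oplus \mathbb{Z}a$, we have $(mg' - g,\, mk' - k) = (-jb,\, jn)$. This splits cleanly into two coupled conditions: (i) $jn \equiv -k \pmod{m}$, and (ii) $g - jb \in mG$.

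I would handle (i) first by pure arithmetic: letting $d = \gcd(m,n)$, a solution $j$ exists iff $d \mid k$, in which case the set of solutions is a coset $j_0 + (m/d)\mathbb{Z}$ with $j_0$ computed from the extended Euclidean algorithm applied to $m$ and $n$. For (ii) along this coset, writing $j = j_0 + t(m/d)$, the element $g - jb$ changes by $-(m/d)b$ when $t$ increases by $1$, and by $-mb \in mG$ when $t$ increases by $d$. So the truth of ``$m \mid g - jb$ in $G$'' is periodic in $t$ with period dividing $d$, and only the finitely many values $t = 0, 1, \ldots, d-1$ need be tested. Each such test is a single query to $\DIV(G)$.

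This yields the decision procedure for $\DIV(H)$: on input $m$ and $h = g + ka$, compute $d = \gcd(m, n)$; reject if $d \nmid k$; otherwise compute $j_0$ via extended Euclidean algorithm and accept iff $\DIV(G)$ returns yes for at least one pair $(g - (j_0 + t(m/d))b,\, m)$ with $t \in \{0, \ldots, d-1\}$. Since any element of $H$ can be effectively put into the form $g + ka$ by the computability of $H$, and all the arithmetic above is elementary, the procedure is uniform in $\DIV(G)$, $n$, and $b$. The only conceptual step is the translation of $H$-divisibility into the paired conditions (i) and (ii) via the presentation of $H$; everything after is bookkeeping, so I do not expect a real obstacle.
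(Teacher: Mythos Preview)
Your argument is correct. You model $H$ as $(G \oplus \mathbb{Z})/\langle(-b,n)\rangle$, unfold ``$m \mid g+ka$'' into the coupled conditions (i) $jn \equiv -k \pmod m$ and (ii) $g-jb \in mG$, solve (i) by elementary number theory, and bound the search in (ii) by observing that shifting $j$ by $m$ moves $g-jb$ by an element of $mG$, so only $d=\gcd(m,n)$ residues need checking. The only small omission is that you must first locate $a$ inside the given computable presentation of $H$ (search for the unique $a$ with $na=b$; uniqueness uses that $H$, sitting inside the divisible hull of $G$, is torsion-free), but this is routine.

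The paper proceeds differently. It first normalizes by repeatedly replacing $b$ by a divisor until no prime factor of $n$ divides $b$ in $G$; this guarantees the representation $x = ma + g$ with $0 \le m < n$ is unique. It then reduces divisibility by an arbitrary $r$ to divisibility by a single prime $q$ (finding a divisor and iterating), and splits into two cases: if $\gcd(q,n)=1$, then $q \mid x$ in $H$ iff $q \mid nx = mb + ng$ in $G$; if $q \mid n$, then $q \mid x$ iff $q \mid m$ in $\mathbb{Z}$ and $q \mid g$ in $G$. Your approach buys you a direct one-shot test for arbitrary $m$ with at most $\gcd(m,n)$ calls to $\DIV(G)$, avoiding both the normalization step and the prime-by-prime iteration; the paper's approach buys a simpler per-prime criterion (a single $\DIV(G)$ query each time) at the cost of that extra preprocessing and iteration.
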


Note that since $G$ is torsion-free, $a$ is uniquely determined by $b$ and $n$.

\begin{proof}
We begin by finding $a \in H$ with $na = b$. Using $\DIV(G)$, we may suppose that $b$ is not divisible in $G$ by any prime factor of $n$; if it is, find such a divisor, and replace $b$ by that divisor. Thus $m a \notin G$ for any $0 \leq m < n$. Given $x \in H$, write $x = m a + g$ with $0 \leq m < n$ and $g \in G$. We want to decide whether $x$ is divisible by some number $r$. It suffices to decide whether $x$ is divisible by a prime $q$; if it is, then we can find such a divisor and repeat the process, noting that since $G$ is torsion-free, divisors are unique.

If $q$ and $n$ are coprime, then we claim that $q$ divides $x$ if and only if $q$ divides $mb + ng$. If $q$ divides $x$, then $q$ divides $mb + ng = nx$. For the other direction, suppose $q$ divides $mb + ng$, say $q h = mb + ng$. Since $q$ and $n$ are coprime, let $r$ and $s$ be such that $q r = 1 + n s$. Then
\[ q(r x - s h) = q r x - q s h = (1+ns)x - nsx = x. \]
So $q$ divides $x$. Since $mb + ng$ is in $G$, we can decide whether $q$ divides $mb + ng$, and hence whether $q$ divides $x$.

On the other hand, suppose that $q$ and $n$ are not coprime, so that $q \mid n$. If $q$ divides $g$ and $q \mid m$, then $q$ divides $x = ma + g$. For the other direction, suppose that $q$ divides $x = ma + g$. Let $y = m' a + g'$, with $g' \in G$, be such that $q y = x$. Then
\[ (qm' - m) a = g - q g'\]
Thus $n \mid qm' - m$, and so $q \mid m$. Since $q$ divides $m a$ and $q$ divides $ma + g$, $q$ divides $g$. So $q$ divides $x = ma + g$ in $H$ if and only if $q \mid m$ and $q$ divides $g$ in $G$.
Thus $\DIV(H)$ is computable.
\end{proof}

We are now ready to show that when we can compute the dividing set of the value group of a formally $p$-adic valued field, we can effectively embed the field into a $p$-adic closure.

\begin{theorem}\label{thm:div-extens}
Let $(K,v)$ be a computable formally $p$-adic valued field with value group $\Gamma$. Suppose that $\DIV(\Gamma)$ is computable. There is a computable embedding of $K$ into a computable $p$-adic closure $(L,w)$.
\end{theorem}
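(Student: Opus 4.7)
The strategy is to build $L$ in two phases. In Phase 1, I effectively extend $K$ to a computable formally $p$-adic field $K^*$ whose value group $\Gamma^*$ is a $\mathbb{Z}$-group, equivalently whose coarse value group $\Gamma^*/\mathbb{Z}$ is divisible. In Phase 2, I take $L := (K^*)^h$ via the Henselization construction (Proposition 6 of \cite{Smith}, stated earlier). Since Henselization preserves both the value group and the residue field, $L$ is then Henselian, formally $p$-adic, and has $\mathbb{Z}$-group value group, so $L$ is $p$-adically closed.

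For Phase 1 I build a computable tower $K = K_0 \subseteq K_1 \subseteq \cdots$ of finite algebraic formally $p$-adic extensions, presented uniformly as computable subfields of a fixed computable $\overline{K}$ (using Rabin's theorem and propagating splitting algorithms via Theorem \ref{Kronecker}). By Lemma \ref{lem:ext-div}, $\DIV(\Gamma_s)$ is uniformly computable, where $\Gamma_s$ is the value group of $K_s$. At stage $s+1$, using a pairing function I address the next pair $(\gamma, q)$ with $\gamma \in \Gamma_s \setminus \mathbb{Z} \cdot v(p)$ and $q$ prime. Using $\DIV(\Gamma_s)$, I check whether $\gamma + k\, v(p) \in q \Gamma_s$ for some $k \in \{0,\ldots,q-1\}$; if so, $\bar{\gamma}$ is already $q$-divisible in $\Gamma_s / \mathbb{Z}$ and I set $K_{s+1} := K_s$. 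Otherwise the non-divisibility of $\bar{\gamma}$ in $\Gamma_s/\mathbb{Z}$ implies that every such $k$ gives $\gamma + k\, v(p) \notin q\Gamma_s$; I pick any $k$ and some $a \in K_s$ with $v(a) = \gamma$, and set $K_{s+1} := K_s(x)$ for $x$ a root of $f(T) := T^q - p^k a$ in $\overline{K}$.

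The algebraic verification is that $K_{s+1}$ is still formally $p$-adic, with $\Gamma_{s+1} = \Gamma_s + \mathbb{Z} \cdot v(x)$ and residue field $\mathbb{F}_p$. The Newton polygon of $f$ is a single segment of slope $(k+\gamma)/q$, so every root $x$ of $f$ satisfies $v(x) = (k+\gamma)/q$; since $\gamma + k\, v(p) \notin q\Gamma_s$ and $q$ is prime, $v(x)$ has order exactly $q$ modulo $\Gamma_s$. The fundamental inequality (Theorem \ref{thm:fund-ineq}) together with $[K_s(x):K_s] \leq q$ then forces $[K_s(x):K_s] = q$, so $f$ is irreducible, the valuation has a unique extension with ramification index $q$ and residue degree $1$, and the residue field stays $\mathbb{F}_p$. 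Because $\gamma \notin \mathbb{Z} \cdot v(p)$, the coset of $v(x)$ in $\Gamma_{s+1}/\mathbb{Z}$ is nonzero and $v(x)$ lies outside the convex subgroup generated by $v(p)$, so $v(p)$ remains the least positive element of $\Gamma_{s+1}$. In the limit, $\Gamma^*/\mathbb{Z}$ is $q$-divisible for every prime $q$ and hence divisible, so $K^*$ is the desired formally $p$-adic computable field.

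The main obstacle is this algebraic verification, especially guaranteeing that the residue field does not grow and that $v(p)$ remains the minimal positive element; the restriction to $\gamma$ outside $\mathbb{Z} \cdot v(p)$ and the prime-by-prime strategy are what make these conditions robust. Computability is then routine throughout: each stage performs finitely many operations on a finite algebraic extension carrying a splitting algorithm and a computable dividing set, and Smith's Henselization yields the final $p$-adic closure $L$ with the computable embedding $K \hookrightarrow K^* \hookrightarrow L$.
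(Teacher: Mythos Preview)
Your strategy coincides with the paper's: adjoin prime-degree totally ramified extensions (roots $x$ of $T^q - p^{k}a$) to force the coarse value group to become divisible, using computability of $\DIV$ at each stage to decide whether an extension is needed, and Henselize. The only structural difference is that the paper Henselizes after every extension whereas you Henselize once at the end; since each step $K_s \subset K_s(x)$ is totally ramified of prime degree, the valuation extends uniquely whether or not $K_s$ is Henselian, so this is immaterial.

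There is, however, a genuine gap in your verification that $v(p)$ remains the least positive element of $\Gamma_{s+1}$. Knowing that $v(x)$ lies outside the convex subgroup generated by $v(p)$ (equivalently $\bar\gamma \neq 0$ in $\Gamma_s/\mathbb{Z}$) does not by itself rule out some combination $g + m\,v(x)$ with $g \in \Gamma_s$ and $0 < m < q$ landing strictly between $0$ and $v(p)$. What rules this out is the stronger hypothesis $\bar\gamma \notin q(\Gamma_s/\mathbb{Z})$ that you set up but then fail to invoke here. The paper carries out the needed computation: if $0 < g + m\,v(x) < 1$ then $qg + m(\gamma+k) \in \Gamma_s$ is an integer $j$ with $0 < j < q$, and since $\gcd(m,q)=1$ a B\'ezout manipulation yields $\gamma + r \in q\Gamma_s$ for some $0 \le r < q$, contradicting the case hypothesis. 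Your restriction to $\gamma \notin \mathbb{Z}\cdot v(p)$ is in fact redundant (when $\gamma \in \mathbb{Z}$ the divisibility test already passes and $K_{s+1}=K_s$), so it cannot be what makes this step go through.

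A minor point: you invoke Theorem~\ref{Kronecker} to propagate splitting algorithms along the tower, but the hypothesis does not give $K$ a splitting algorithm, so Kronecker does not apply. This is harmless because you never actually need one: irreducibility of $T^q - p^{k}a$ over $K_s$ already follows from your ramification argument, and $K_{s+1}$ can be presented directly as $K_s[T]/(T^q - p^{k}a)$ with the explicit valuation $v\bigl(\sum_i c_i T^i\bigr)=\min_i\bigl(v(c_i)+i(k+\gamma)/q\bigr)$.
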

\begin{proof}
We will construct a sequence $(K_0,v_0) = (K,v)^h \subseteq (K_1,v_1) \subseteq (K_2,v_2) \subseteq \cdots$ of computable Henselian valued fields such that $(L,w) = \bigcup_i (K_i,v_i)$ is a computable $p$-adic closure of $(K,v)$. If $(a_i,q_i)$ is an enumeration of all of the pairs of elements $a$ from $L$ and primes $q$, with $a_i \in K_i$, we will ensure at stage $s+1$ that $q_s$ divides one of $w(a_s),w(a_s)+1,\ldots,w(a_s)+q_s - 1$. Note that we must construct the sequence $(a_i,q_i)$ concurrently with the $K_i$. For each $i$, $\DIV(v_i(K_i))$ will be computable.

At stage $s+1$, ask $\DIV(v_s(K_s))$ whether $q_s$ divides one of $v_s(a_s),v_s(a_s)+1,\ldots,v_s(a_s)+q_s - 1$. If it does, then just set $K_{s+1} = K_s$. Otherwise, let $b$ be an $q_s$th root of $a_s$ and let $E = K_s(b)$; since $K_s$ was Henselian, there is only extension $v'$ of $v_s$ to $E$. Note that $q_s$ divides $a_s$ in $v'(E)$. By Lemma \ref{lem:ext-div}, $\DIV(v(E))$ is computable uniformly. Let $K_{s+1}$ be a Henselization of $E$. Then the value group of $K_{s+1}$ is the same as that of $E$.

To see that $E$ (and hence $K_{s+1}$) is formally $p$-adic, we must show that the residue field is still $\mathbb{F}_p$ and that $1 = v(p)$ is still the least positive element of the value group. First, since $q_s$ is prime and $v(a_s)$ is not divisible by $q_s$, $[v'(E):v_s(K_s)] = q_s = [E:K_s]$. By the fundamental inequality, the residue degree of $v'$ over $v_s$ is one. Thus the residue field of $E$ is again $\mathbb{F}_p$.

Each element of $E$ can be written in the form
\[ d = c_{q_s - 1} b^{q_s - 1} + c_{q_s - 2} b^{q_s - 2} + \cdots + c_1 b + c_0 \]
with the $c_i \in K_s$. We want to show that $v'(d)$ is not strictly in between $0$ and $1 = v(p)$. Suppose to the contrary that $d$ has valuation strictly between $0$ and $1$. Note that as $v'(b),\ldots,v'(b^{q_s-1})$ are all distinct and not in $\Gamma_{K_s}$, that $v'(d) = \min_{0 \leq i \leq q_s - 1} v'(c_i b^i)$. Since $c_0 \in K_s$ does not have valuation strictly between $0$ and $1$, $v'(d) = v'(c_i b^i)$ for some $i \geq 1$. Then $0 < q_s v'(c_i b^i) < q_s$. Note that $q_s v'(c_i b^i) = q_s v(c_i) + i v(a_s)$ is in $\Gamma_{K_s}$. Let $\gamma = v(c_i) \in \Gamma_{K_s}$. Thus, for some $j$, $1 \leq j < q_s$, $q_s \gamma + i v(a_s) = j$. Since $1 \leq i < q_s$, $\gcd(q_s,i) = 1$. Let $m,n$ be such that $m i + n q_s = 1$. Then
\[ m q_s \gamma + v(a_s) = m q_s \gamma + m i v(a_s) + n q_s v(a_s) = m j + n q_s v(a_s).\]
Since $q_s \nmid m,j$, we can write $m j = q_s d - r$, where $1 \leq r < q_s$. Then
\[ v(a_s) + r = q_s (m \gamma + d + n v(a_s)).\]
This is a contradiction, as $q_s$ does not divide $v(a_s) + r$ in $\Gamma_{K_s}$. So no element of $E$ has valuation strictly between $0$ and $1$. Thus $E$ is formally $p$-adic.

Now $(L,w) = \bigcup_i (K_i,v_i)$ is a computable valued field into which $(K,v)$ embeds computably, and $(L,w)$ is algebraic over $(K,v)$. Moreover, $(L,w)$ is a model of $\pCF$: it is formally $p$-adic as the union of formally $p$-adic valued fields, it is Henselian as the union of Henselian fields, and we ensured that the value group was a model of Presburger arithmetic.
\end{proof}

\section{The Mal'cev Property}

We begin by recalling the metatheorem from \cite{HTMelnikovMontalban15}. The metatheorem is stated using the general notion of a pregeometry, but for the purposes of this paper, the pregeometry will always be algebraic independence in fields, and the reader need not know the general definition of a pregeometry.

\begin{definition}
A class $\mathcal{K}$ has the \emph{Mal{\textquotesingle}cev property} if each member $\mathcal{M}$ of $\mathcal{K}$ of infinite dimension  has a computable presentation $\mc{G}$ with a computable basis and a computable presentation $\mc{B}$ with no computable basis  such that $\mc{B} \cong_{\Delta^0_2} \mc{M} \cong_{\Delta^0_2} \mc{G}$.
\end{definition}

In \cite{HTMelnikovMontalban15}, two conditions were isolated which imply the Mal{\textquotesingle}cev property. We require some definitions before we state these conditions and the metatheorem.

\begin{definition} The \emph{independence diagram} $\mc{I}_{\mc{M}}({\bar{c}})$ of $\bar{c}$ in $\mc{M}$ is the collection of all existential formulas true of tuples independent over $\bar{c}$.\end{definition}

\begin{definition} We say that \emph{dependent elements are dense in $\mc{M}$} if, whenever $\mc{M} \models \exists \bar{y} \psi(\bar{c}, \bar{y}, a)$ for a quantifier-free formula $\psi$, non-empty tuple $\bar{c}$, and $a \in \mc{M}$, there is a $b\in \cl(\bar{c})$ such that $\mc{M} \models \exists \bar{y} \psi(\bar{c}, \bar{y}, b)$. We may also assume that $\bar{c}$ contains at least $m$ independent elements, for some fixed $m$.\end{definition}

\begin{definition}\label{defn:loc} We say that independent tuples in $\mc{M}$ are \emph{locally indistinguishable} if for every tuple $\bar{c}$ in $\mc{M}$ and $\bar{u}$, $\bar{v}$ independent tuples over $\bar{c}$,  
for each existential formula $\phi$ such that $\mc{M} \models \phi(\bar{c}, \bar{u})$, there exists a tuple $\bar{w}$ that is independent over $\bar{c}$, has $\mc{M} \models \phi (\bar{c}, \bar{w})$, and (with $\bar{w} = (w_1,\ldots,w_n)$ and $\bar{v} = (v_1,\ldots,v_n)$) we have $w_i \in \cl (\bar{c},v_1,\ldots,v_i)$ for $i = 1,\ldots,n$.
\end{definition}

The two conditions are as follows:

\medskip

\noindent  \textbf{Condition G:} Independent tuples are locally indistinguishable in $\mc{M}$ and for each $\mc{M}$-tuple $\bar{c}$, $\mc{I}_{\mc{M}}({\bar{c}})$ is computably enumerable uniformly in~$\bar{c}$.

\medskip

\noindent \textbf{Condition B:} Dependent elements   are dense in $\mc{M}$.

\medskip

\begin{theorem}[Theorem 1.2 of \cite{HTMelnikovMontalban15}]\label{metatheorem}
Let $\mc{K}$ be a class of computable structures that admits a r.i.c.e.~pregeometry $\cl$.\footnote{Recall that here this will just be algebraic independence.}
If  each $\mc{M}$ in $\mc{K}$ of infinite dimension satisfies Conditions $G$ and $B$, then $\mc{K}$ has the  Mal{\textquotesingle}cev property.
\end{theorem}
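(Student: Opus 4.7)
The plan is to construct, for each $\mc{M} \in \mc{K}$ of infinite dimension, two computable presentations $\mc{G}$ and $\mc{B}$ together with $\Delta^0_2$ isomorphisms to $\mc{M}$. For $\mc{G}$ I would build a computable sequence $b_0, b_1, \ldots$ alongside $\mc{G}$ itself by a back-and-forth construction, at each stage verifying via the c.e.\ independence diagram $\mc{I}_{\mc{M}}(\bar{c})$ (given by Condition G) that the tuple chosen as the next basis element can be matched in $\mc{M}$ by a tuple independent over all elements already placed. The limit of the finite partial isomorphisms gives a $\Delta^0_2$ map $\mc{G} \to \mc{M}$, and the designated sequence is a computable basis for $\mc{G}$ because every element of $\mc{G}$ is introduced as lying in the closure of an initial segment of the sequence.

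For $\mc{B}$ I would run a finite-injury priority argument against the requirements $R_e$: the $e$-th c.e.\ set $W_e$ is not a basis of $\mc{B}$. To meet $R_e$, I use Condition B (dense dependent elements): whenever $W_e$ presents a candidate independent tuple $\bar{c}$, I locate the next element $a$ enumerated into $W_e$ and arrange, using Condition B, that $a \in \cl(\bar{c})$ in $\mc{B}$ while still realizing the existential formulas that the corresponding element of $\mc{M}$ realizes over $\bar{c}$. This forces $W_e$ to be dependent, hence not a basis. Local indistinguishability (Definition~\ref{defn:loc}) is what allows us to reconcile this with the back-and-forth: whenever diagonalization forces us to swap out an independent tuple for a dependent one, local indistinguishability produces a replacement tuple realizing the same existential formula and lying in the closure of the already-committed data, so the partial isomorphism with $\mc{M}$ can be rebuilt after each diagonalization.

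The main obstacle, as in other Mal{\textquotesingle}cev-style metatheorems, is coordinating the priority argument with the back-and-forth that maintains $\mc{B} \cong_{\Delta^0_2} \mc{M}$: after each injury we must re-commit a partial isomorphism over the forced dependence and argue that only finitely many injuries occur per requirement $R_e$. Condition G's c.e.\ independence diagrams and local indistinguishability are exactly what make this bookkeeping uniform and recoverable, so the argument goes through by standard finite-injury machinery, with the $\Delta^0_2$ isomorphism $\mc{B} \to \mc{M}$ appearing as the limit of the eventually-correct partial isomorphisms.
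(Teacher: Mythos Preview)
The paper does not prove this theorem at all: it is quoted verbatim as Theorem~1.2 of \cite{HTMelnikovMontalban15} and used as a black box, so there is no ``paper's own proof'' to compare against. Your plan is broadly the standard shape of such arguments (build $\mc{G}$ by approximating a basis while enumerating the independence diagram, and build $\mc{B}$ by a finite-injury diagonalization that uses density of dependent elements to kill each candidate basis $W_e$), and nothing in it is obviously wrong as a high-level sketch. If you want feedback on the actual details you would need to consult \cite{HTMelnikovMontalban15}, since none of the machinery is reproduced here.
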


\subsection{\texorpdfstring{The Mal{\textquotesingle}cev property for $\ACVF$}{The Mal'cev property for ACVF}}

We will now use the metatheorem to show that algebraically closed valued fields have the Mal{\textquotesingle}cev property. Note that in $\ACVF$, algebraic dependence is the same as model-theoretic $\acl$.

\begin{theorem}
Algebraically closed valued fields 
have the Mal{\textquotesingle}cev property.
\end{theorem}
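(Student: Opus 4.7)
My plan is to apply the metatheorem with the r.i.c.e.\ pregeometry given by algebraic independence, which coincides with model-theoretic $\acl$ in $\ACVF$ by quantifier elimination. It suffices to verify Conditions G and B for an arbitrary computable $\mc{M} \models \ACVF$ of infinite transcendence degree. The central tool I rely on is that, by QE, for any set $A \subseteq \mc{M}$, the field-theoretic algebraic closure inside $\mc{M}$ of the subfield generated by $A$, equipped with the induced valuation, is itself a model of $\ACVF$ and an elementary substructure of $\mc{M}$; in particular $\acl(A) \preceq \mc{M}$ as a valued field.

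Condition B is immediate: if $\mc{M} \models \exists \bar{y}\, \psi(\bar{c}, \bar{y}, a)$, then the existential sentence $\exists x\, \bar{y}\, \psi(\bar{c}, \bar{y}, x)$ has a witness in $\acl(\bar{c}) = \cl(\bar{c})$ by elementarity, so we may take $b \in \cl(\bar{c})$ with $\mc{M} \models \exists \bar{y}\, \psi(\bar{c}, \bar{y}, b)$.

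For Condition G, the c.e.-ness of $\mc{I}_{\mc{M}}(\bar{c})$ uniformly in $\bar{c}$ I would derive from decidability of $\ACVF$ (after naming the characteristic data): an existential $\phi(\bar{c}, \bar{x})$ has a $\bar{c}$-independent witness in $\mc{M}$ iff the quantifier-free formula to which it is equivalent cuts out a set of full algebraic dimension over $\bar{c}$, which can be read off the formula by QE. In fact this is decidable, so $\mc{I}_{\mc{M}}(\bar{c})$ is even computable, uniformly in $\bar{c}$.

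The locally indistinguishable clause is the main step, and will be the real obstacle. Given $\bar{c}$, independent tuples $\bar{u}, \bar{v}$ over $\bar{c}$, and an existential $\phi$ with $\mc{M} \models \phi(\bar{c}, \bar{u})$, I would build $\bar{w}$ inductively, choosing $w_i \in \acl(\bar{c}, v_1, \ldots, v_i)$ transcendental over $\bar{c}, w_1, \ldots, w_{i-1}$ so that the invariant ``$\exists \bar{x}_{>i}\, \phi(\bar{c}, \bar{w}_{\le i}, \bar{x}_{>i})$ has an independent witness in $\mc{M}$'' is maintained; the invariant holds at stage $0$ because $\bar{u}$ itself is such a witness. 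The crux is producing $w_i$: by QE, the definable set
\[ S_i = \{ x \in \mc{M} : \exists \bar{x}_{>i}\, \phi(\bar{c}, \bar{w}_{<i}, x, \bar{x}_{>i}) \} \]
in one variable is a finite Boolean combination of balls, and it contains a transcendental over $\bar{c}, \bar{w}_{<i}$ (inherited from the invariant), hence it contains an open ball $B$ of positive radius. Because the value group of $\acl(\bar{c}, v_1, \ldots, v_i)$ is the divisible hull of that of $\acl(\bar{c}, v_1, \ldots, v_{i-1})$ adjoining $v(v_i)$, translates of the form ``center of $B$ plus a suitable multiple of $v_i$'' realize points in $B$ that lie in $\acl(\bar{c}, v_1, \ldots, v_i)$ and are transcendental over $\bar{c}, w_1, \ldots, w_{i-1}$; any such point serves as $w_i$. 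The induction closes, and together with Condition B the metatheorem delivers the Mal{\textquotesingle}cev property for $\ACVF$.
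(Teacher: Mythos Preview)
Your overall strategy matches the paper's: apply the metatheorem with $\cl=\acl$, get Condition~B from elementarity of $\acl(\bar{c})$, and get the enumerability clause of Condition~G from QE plus decidability of $\ACVF$. Those parts are fine.

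The gap is in your inductive verification that independent tuples are locally indistinguishable. You choose $w_i$ inside an open ball contained in the projection
\[
S_i=\{x:\exists \bar{x}_{>i}\ \phi(\bar{c},\bar{w}_{<i},x,\bar{x}_{>i})\},
\]
but you never argue that the fiber $\{\bar{x}_{>i}:\phi(\bar{c},\bar{w}_{<i},w_i,\bar{x}_{>i})\}$ still contains a tuple independent over $\bar{c},\bar{w}_{\le i}$, which is exactly the invariant you must carry forward. The open ball you find in $S_i$ may come from a disjunct of $\phi$ whose fibers are algebraic: e.g.\ for $\phi(x,y)\equiv(x=y)\vee(v(x)\ge 0\wedge v(y)\ge 0)$ the projection is all of $K$, yet over any transcendental $w_1$ with $v(w_1)<0$ the fiber is $\{w_1\}$. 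A smaller omission is that ``center of $B$ plus a suitable multiple of $v_i$'' only lands in $B$ once you arrange $v(v_i)\ge 0$, which may require replacing $v_i$ by $v_i^{-1}$.

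The paper sidesteps the induction by working in $K^n$ all at once. After QE and restricting to the disjunct containing the given independent $\bar{u}$, the formula is a conjunction of conditions $v(f(\bar{x},\bar{c}))\mathrel{\Box} v(g(\bar{x},\bar{c}))$, hence defines an \emph{open} $S\subseteq K^n$. By elementarity of $\mathbb{Q}(\bar{c})^{alg}$ one finds two distinct points $\bar{u}',\bar{v}'\in S\cap\acl(\bar{c})$ with $\bar{v}'$ in an open ball $B(\bar{u}',\epsilon)\subseteq S$. For any independent $\bar{b}$ (after passing to $b_i^{-1}$ when needed so that $v(b_i)\ge 0$), the explicit interpolation $b_i'=u_i'+b_i(v_i'-u_i')$ lands in $B(\bar{u}',\epsilon)\subseteq S$ and has $b_i'$ interalgebraic with $b_i$ over $\bar{c}$ for each $i$ separately. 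This two-points-in-a-ball trick is the missing idea; the same ``pass to the open disjunct'' step is also what would repair your inductive argument, but once you have it the direct $n$-variable construction is shorter.
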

\begin{proof}
Let $(K,v)$ be an algebraically closed valued field of infinite transcendence degree. We begin by checking that independent types are locally indistinguishable. Let $S \subseteq K^n$ be a definable set with parameters $\bar{c}$ which contains a tuple $\bar{a} = (a_1,\ldots,a_n) \in K^n$ independent over $\bar{c}$. We may assume that some element of the tuple $\bar{c}$ is non-trivially valued. Using quantifier elimination in $\ACVF$ and writing $S$ in disjunctive normal form, we may, without loss of generality, take $S$ to be the disjunct which contains $\bar{a}$. Since $S$ contains $\bar{a}$ which is independent over $\bar{c}$, $S$ is defined by a conjunction of formulas of the form $v(f(\bar{x},\bar{c}))\leq v(g(\bar{x},\bar{c}))$ (or such a formula with $\leq$ replaced by $<$, or $\neq$). The subfield $\mathbb{Q}(\bar{c})^{alg}$ is a model of $\ACVF$, and by model completeness, an elementary submodel of $K$. Hence it contains an element $\bar{u}=(u_1,\ldots,u_n)$ which is in $S$. Note that $S$ is open in the valuation topology, and so it contains an open ball
\[ B(\bar{u},\epsilon) = \{ \bar{x} : v(u_i - x_i) \geq \epsilon \} \]
around $\bar{u}$, with $\epsilon \in \Gamma(\mathbb{Q}(\bar{c})^{alg})$. 
There is also some $\bar{v} \neq \bar{u}$ with $\bar{v} \in B(\bar{u},\epsilon) \cap \mathbb{Q}(c)^{alg}$. Write $\bar{v} = (v_1,\ldots,v_n)$. Let $\bar{b} = (b_1,\ldots,b_n)$ be an arbitrary tuple from $K$ independent over $\bar{c}$. Possibly replacing each $b_i$ with $b_i^{-1}$, we may assume that $v(b_i) \geq 0$. Let $\bar{b}' = (b_iv_i - (b_i - 1)u_i)_{i=1}^n$. Note that $b_i$ and $b_i'$ are interalgebraic over $\bar{c}$. Then
\[ v(u_i - b_iv_i + (b_i - 1)u_i) = v(b_iu_i - b_iv_i) = v(b_i) + v(u_i-v_i) \geq \epsilon.\]
So $\bar{b}' \in B(\bar{u},\epsilon) \subseteq S$. We have shown that independent types are locally indistinguishable.


A similar argument works to show that independent types are non-principal. Let $S \subseteq K^n$ be a definable set with parameters $\bar{c}$, again assuming that some element of the tuple $\bar{c}$ is non-trivially valued. Then $\mathbb{Q}(\bar{c})^{alg}$ is a model of $\ACVF$ and by model completeness there is a tuple $\bar{a} \in \mathbb{Q}(\bar{c})^{alg}$ which is contained in $S$. The tuple $\bar{a}$ is algebraic over $\bar{c}$.

We showed above that a definable set $S$ with parameters $\bar{c}$ contains a tuple independent over $\bar{c}$ if and only if it contains, as a disjunct, a non-empty definable set defined by a conjunction of formulas of the form $v(f(\bar{x},\bar{c}))\leq v(g(x,\bar{c}))$ (or with $\leq$ replaced by $<$ or $=$). Together with the decidability of the theory $\ACVF$, this fact allows us to enumerate the independence diagram of $K$.

By Theorem \ref{metatheorem}, $\ACVF$ has the Mal{\textquotesingle}cev property.
\end{proof}

\subsection{\texorpdfstring{The Mal'cev property for $\pCF$}{The Mal'cev property for pCF}}

Now we will apply the metatheorem to $p$-adically closed fields. Once again, the pregeometry will be algebraic independence which is the same as model-theoretic $\acl$. Our proof will use the cell decomposition for $p$-adically closed fields. We begin with a lemma which we will use to check that independent tuples are locally indistinguishable.

\begin{lemma}\label{lem:inter-alg}
Given a cell
\[ C=\{(\bar{x},y)\in B\times K:v(f(x))\Box_{1}v(y-g(\bar{x}))\Box_{2}v(h(x))\text{ and }P^*_{k}(\lambda(x-g(y))\} \]
and $\bar{a}\in B$, $b$ algebraically independent from $\bar{a}$, $\lambda$, and the coefficients of $f$ and $g$, with $(\bar{a},b)\in C$, and $c$ is algebraically independent from $\bar{a}$, there is $c'$ interalgebraic with $c$ over $\bar{a}$ with $(\bar{a},c')\in C$.
\end{lemma}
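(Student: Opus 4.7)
The plan is to find a representative of the cell inside a small elementary $p$-adically closed subfield and then multiplicatively perturb it by something built from $c$, keeping the perturbation factor at valuation zero so that the cell's valuation constraints are automatically preserved.

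First, I would pick $F \prec K$ to be a $p$-adically closed elementary submodel containing $\bar{a}$, $\lambda$, and the coefficients of $f,g,h$; such $F$ exists by L\"owenheim--Skolem together with the model completeness of $\pCF$. The slice of $C$ above $\bar{a}$,
\[ C' = \{ y \in K : v(f(\bar{a})) \Box_1 v(y - g(\bar{a})) \Box_2 v(h(\bar{a})) \wedge P^*_k(\lambda(y - g(\bar{a}))) \}, \]
is defined over $F$ by an existential formula (unwinding $P^*_k$ as the existential statement that a nonzero $k$-th root exists) and is inhabited in $K$ by $b$. Hence $F \prec K$ yields some $c_0 \in F \cap C'$ together with a $w_0 \in F$ satisfying $w_0^k = \lambda(c_0 - g(\bar{a}))$.

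Next, after replacing $c$ by $c^{-1}$ if necessary (harmless, being trivially interalgebraic with $c$), I may assume $v(c) \geq 0$, so that $v(pc) \geq 1$ and therefore $v(1 + pc) = 0$. I would then set
\[ c' := g(\bar{a}) + \lambda^{-1} w_0^k (1 + pc)^k. \]
By construction $\lambda(c' - g(\bar{a})) = \bigl(w_0(1+pc)\bigr)^k$, a nonzero $k$-th power (transcendence of $c$ over $\bar{a}$ rules out $1 + pc = 0$), and
\[ v(c' - g(\bar{a})) = v(c_0 - g(\bar{a})) + k\,v(1 + pc) = v(c_0 - g(\bar{a})), \]
so $c'$ inherits the valuation inequalities from $c_0$ and lies in $C'$. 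For interalgebraicity, $c' \in F(c)$ is immediate, and conversely $(1 + pc)^k = \lambda(c' - g(\bar{a}))/w_0^k \in F(c')$ shows that $c$ is algebraic of degree at most $k$ over $F(c')$, so $c$ and $c'$ are interalgebraic over $\bar{a}$ together with the cell's parameters.

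The main obstacle will be the valuation constraint: the naive attempt $c' = g(\bar{a}) + \lambda^{-1} c^k$ produces $v(c' - g(\bar{a})) = -v(\lambda) + k\,v(c)$, generically outside the prescribed interval bounded by $v(f(\bar{a}))$ and $v(h(\bar{a}))$. Overcoming this is what forces me to pin the perturbation at valuation zero via $1 + pc$, and that step in turn depends on the preliminary extraction of the in-cell witness $c_0$ (and its $k$-th root $w_0$) inside $F$, which is where the model completeness of $\pCF$ does the real work.
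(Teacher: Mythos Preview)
Your strategy matches the paper's: locate a witness in the slice over $\bar{a}$ that is algebraic over the parameters, then perturb it multiplicatively by a valuation-zero element built from $c$. The paper obtains its algebraic witness $a'$ by invoking definable Skolem functions directly, and then does a small case split ($c'' = ca'$ when $v(c)=0$, $c'' = (1+c)a'$ when $v(c)>0$) before setting $c' = (c'')^k/\lambda + g(\bar{a})$; your uniform use of $1+pc$ is a mild streamlining of the same idea.

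There is one gap, however. Your $F$ is produced by L\"owenheim--Skolem, which gives no control over its transcendence degree: nothing prevents $F$ from containing elements transcendental over $\bar{a}$ and the cell parameters, and in particular nothing prevents $c$ itself from lying in $F$ (the hypothesis only makes $c$ independent from $\bar{a}$). Your interalgebraicity argument establishes that $c$ and $c'$ are interalgebraic over $F$, not over $\bar{a}$ together with the cell's parameters; the jump in your last sentence is unjustified as written. The repair is immediate and is exactly what the paper does implicitly: since $\pCF$ has definable Skolem functions, take $F = \operatorname{dcl}(\bar{a},\lambda,\text{coeffs of }f,g,h)$, which is already an elementary submodel and sits inside the field-theoretic algebraic closure of those parameters. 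With this choice $c_0$ and $w_0$ are algebraic over the parameters, $c$ is genuinely transcendental over $F$, and the rest of your argument goes through unchanged.
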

\begin{proof}
Since $b$ is algebraically independent from $\bar{a}$, we know that $k\neq0$. Assume that $\Box_1$ and $\Box_2$ are $\leq$, so that
\[ C=\{(\bar{x},y)\in B\times K:v(f(x))\leq v(x-g(y))\leq v(h(x))\text{ and }P^*_{k}(\lambda(x-g(y))\}. \]
The other cases are similar. It suffices to find $c''$ interalgebraic with $c$ over $\bar{a}$ such that $v(\lambda f(\bar{a}))\leq kv(c'')\leq v(\lambda h(\bar{a}))$, as then $c' = (c'')^k / \lambda + g(\bar{a})$ has $(\bar{a},c') \in C$.
We may replace $\lambda f$ by $\hat{f}$ and similarly with $h$ and $\hat{h}$ to get $v(\hat{f}(\bar{a}))\leq kv(c'')\leq v(\hat{h}(\bar{a}))$. Now $K\models(\exists y)v(\hat{f}(\bar{a}))\leq kv(y)\leq v(\hat{h}(\bar{a}))$, and so since we have definable Skolem functions, there is $a'$ algebraic over $\bar{a}$ satisfying this. Moreover, we can choose $a' \neq 0$.

If $v(c)=0$, then we have $v(ca')=v(a')$ and so we can take $c'' = ca'$. Otherwise, by replacing $c$ by $c^{-1}$ if necessary, we may assume that $v(c)>0$. Then $v(1+c)=0$, and so $v(a'+ca')=v(a')$. Then we can take $c'' = a'+ca'$.
\end{proof}

\begin{theorem}
$p$-adically closed fields have the Mal{\textquotesingle}cev property.
\end{theorem}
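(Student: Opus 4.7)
The plan is to apply the metatheorem (Theorem \ref{metatheorem}) by verifying Conditions B and G for each $p$-adically closed field $(K,v)$ of infinite transcendence degree. Condition B (density of dependent elements) follows immediately from van den Dries' result that $\pCF$ admits definable Skolem functions: if $K \models \exists \bar{y}\,\psi(\bar{c}, \bar{y}, a)$ then a Skolem term supplies an element $b \in \dcl(\bar{c}) \subseteq \cl(\bar{c})$ satisfying $\exists \bar{y}\, \psi(\bar{c}, \bar{y}, b)$.

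For the c.e.\ enumeration of $\mc{I}_K(\bar{c})$, I use decidability of $\pCF$ (which eliminates quantifiers after adding the predicates $P_n$) together with cell decomposition. Given an existential $\phi(\bar{c}, \bar{x})$, decompose $\phi(\bar{c}, K^n)$ into finitely many cells with parameters definable over $\bar{c}$, and output $\phi$ iff some cell has type $(1, \ldots, 1)$. Such a full-dimensional cell contains a tuple independent over $\bar{c}$: at each level the corresponding 1-cell is an infinite valuation-open subset of $K$ intersected with a $P_k^*$-coset, so the coordinate at that level may be chosen outside the algebraic closure of $\bar{c}$ together with the previously chosen coordinates. Conversely, a $0$ in the type forces some coordinate to be a definable function of the others and the cell parameters, hence algebraic over $\bar{c}$ together with the remaining coordinates.

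For local indistinguishability, let $\bar{u}, \bar{v}$ be tuples independent over $\bar{c}$ with $K \models \phi(\bar{c}, \bar{u})$, and let $C$ be the $(1, \ldots, 1)$-cell in the decomposition of $\phi(\bar{c}, K^n)$ containing $\bar{u}$; its parameters lie in $\dcl(\bar{c})$. Build $w_1, \ldots, w_n$ recursively, maintaining the invariants that $w_i \in \cl(\bar{c}, v_1, \ldots, v_i)$, that $(w_1, \ldots, w_i)$ lies in the projection $\pi_{\leq i}(C)$, and that some tail $(u'_{i+1}, \ldots, u'_n)$ completes it to a tuple in $C$ with the tail independent from $\bar{c}, w_1, \ldots, w_i, v_1, \ldots, v_n$. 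At stage $i$ the slice of $C$ at $(w_1, \ldots, w_{i-1})$ is a 1-cell in $x_i$ whose coefficients lie in $\cl(\bar{c}, w_1, \ldots, w_{i-1}) \subseteq \cl(\bar{c}, v_1, \ldots, v_{i-1})$; both $u'_i$ and $v_i$ are independent from these coefficients over $\bar{c}, w_1, \ldots, w_{i-1}$, so Lemma \ref{lem:inter-alg} produces $w_i$ in this 1-cell interalgebraic with $v_i$ over $\bar{c}, w_1, \ldots, w_{i-1}$. A fresh tail may then be chosen inside the corresponding fiber of $C$ using the infinite transcendence degree of $K$ and the same full-dimensionality argument as above. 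The resulting $\bar{w}$ is interalgebraic with $\bar{v}$ over $\bar{c}$ (hence independent over $\bar{c}$), lies in $C \subseteq \phi(\bar{c}, K^n)$, and meets $w_i \in \cl(\bar{c}, v_1, \ldots, v_i)$.

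The main obstacle is the bookkeeping in this simultaneous induction on the cell structure and on the tuples: at each stage one must verify that the slice of $C$ at the partially-built tuple is genuinely a 1-cell of the form handled by Lemma \ref{lem:inter-alg}, and that the tail of $\bar{u}$ can always be refreshed so that the independence hypotheses of the lemma hold after the previous substitutions. Both points follow from the recursive definition of $(1, \ldots, 1)$-cells and from the fact that the parameters of $C$ are definable over $\bar{c}$ alone, but organizing this recursion cleanly is the substantive content of the proof.
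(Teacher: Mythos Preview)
Your proposal is correct and follows the same approach as the paper: definable Skolem functions give Condition~B, cell decomposition together with Lemma~\ref{lem:inter-alg} gives local indistinguishability, and cell decomposition plus decidability of the elementary diagram gives the c.e.\ enumeration of the independence diagram. Your recursive tail-refreshing construction is a careful unpacking of what the paper phrases simply as ``by repeated applications of Lemma~\ref{lem:inter-alg}''; the nonemptiness of fibers you rely on in the refresh step is standard in the cited cell-decomposition theorems, even though the paper's definition of a cell does not make it explicit.
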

\begin{proof}
Let $(K,v)$ be a model of $\pCF$ of infinite transcendence degree. We begin by checking that independent tuples are locally indistinguishable. Let $S$ be a set definable over parameters $\bar{c}$, containing a tuple $\bar{a} = (a_1,\ldots,a_n)$ independent over $\bar{c}$. Let $\bar{b} = (b_1,\ldots,b_n)$ be another tuple independent over $\bar{c}$. The set $S$ has a cell decomposition with parameters definable over $\bar{c}$. Some cell must contain $\bar{a}$, and this cell must be of type $(1,\ldots,1)$ since $\bar{a}$ is independent over $\bar{c}$. By repeated applications of Lemma \ref{lem:inter-alg}, we get $\bar{b}'$ in $S$ as required.

Suppose that $S \subseteq K^n$ is a definable set over parameters $\bar{c} \in K^m$. Models of $\pCF$ have definable Skolem functions, so there is a definable function $f \colon K^m \to K^n$ (without parameters) with $f(\bar{c}) \in S$. Then $f(\bar{c})$ is definable over $\bar{c}$, and hence algebraic over $\bar{c}$. So independent types are non-principal.

Finally, we have to enumerate the independence diagram of $K$. We showed above that there
is an independent tuple in a cell if and only if it is of type $(1,\ldots,1)$. Using the decidability of the elementary diagram of $K$, we can enumerate the definable sets which contain such a cell.

By Theorem \ref{metatheorem}, $\pCF$ has the Mal{\textquotesingle}cev property.
\end{proof}

\bibliography{References}

\def\cprime{$'$}
\begin{thebibliography}{HTMM15}

\bibitem[Cal04]{Calvert04}
Wesley Calvert.
\newblock The isomorphism problem for classes of computable fields.
\newblock {\em Arch. Math. Logic}, 43(3):327--336, 2004.

\bibitem[Cha11]{Chatzidakis11}
Zo{\'e} Chatzidakis.
\newblock Introductory notes on the model theory of valued fields.
\newblock In {\em Motivic integration and its interactions with model theory
  and non-{A}rchimedean geometry. {V}olume {I}}, volume 383 of {\em London
  Math. Soc. Lecture Note Ser.}, pages 35--79. Cambridge Univ. Press,
  Cambridge, 2011.

\bibitem[Den86]{Denef86}
Jan Denef.
\newblock {$p$}-adic semi-algebraic sets and cell decomposition.
\newblock {\em J. Reine Angew. Math.}, 369:154--166, 1986.

\bibitem[Dob83]{Dobritsa83}
V.~P. Dobritsa.
\newblock Some constructivizations of abelian groups.
\newblock {\em Sibirsk. Mat. Zh.}, 24(2):18--25, 1983.

\bibitem[EP05]{EnglerPrestel05}
Antonio~J. Engler and Alexander Prestel.
\newblock {\em Valued fields}.
\newblock Springer Monographs in Mathematics. Springer-Verlag, Berlin, 2005.

\bibitem[GLS03]{GoncharovLemppSolomon03}
Sergey~S. Goncharov, Steffen Lempp, and Reed Solomon.
\newblock The computable dimension of ordered abelian groups.
\newblock {\em Adv. Math.}, 175(1):102--143, 2003.

\bibitem[Gon82]{Goncharov82}
S.~S. Goncharov.
\newblock Limit equivalent constructivizations.
\newblock In {\em Mathematical logic and the theory of algorithms}, volume~2 of
  {\em Trudy Inst. Mat.}, pages 4--12. ``Nauka'' Sibirsk. Otdel., Novosibirsk,
  1982.

\bibitem[Har98]{Harizanov98}
Valentina~S. Harizanov.
\newblock Pure computable model theory.
\newblock In {\em Handbook of recursive mathematics, {V}ol.\ 1}, volume 138 of
  {\em Stud. Logic Found. Math.}, pages 3--114. North-Holland, Amsterdam, 1998.

\bibitem[HTMM]{HTMelnikovMiller15}
Matthew Harrison-Trainor, Alexander Melnikov, and Russell Miller.
\newblock On computable field embeddings and difference closed fields.
\newblock {\em Canadian Journal of Mathematics}.
\newblock To appear.

\bibitem[HTMM15]{HTMelnikovMontalban15}
Matthew Harrison-Trainor, Alexander Melnikov, and Antonio Montalb\'an.
\newblock Independence in computable algebra.
\newblock {\em J. Algebra}, 443:441--468, 2015.

\bibitem[HTMMM]{HTMelnikovMillerMontalban}
Matthew Harrison-Trainor, Alexander Melnikov, Russell Miller, and Antonio
  Montalb\'an.
\newblock Computable functors and effective interpretability.
\newblock {\em J. Symbolic Logic}.
\newblock To appear.

\bibitem[IKS]{IgusaKnightSchweber}
Greg Igusa, Julia Knight, and Noah Schweber.
\newblock Computing strength of structures related to the field of real
  numbers.
\newblock preprint.

\bibitem[KL13]{KnightLange13}
Julia~F. Knight and Karen Lange.
\newblock Complexity of structures associated with real closed fields.
\newblock {\em Proc. Lond. Math. Soc. (3)}, 107(1):177--197, 2013.

\bibitem[Kro82]{Kronecker82}
Leopold Kronecker.
\newblock Grundz{\"u}ge einer arithmetischen theorie der algebraischen
  gr{\"o}{\ss}en.
\newblock {\em J. f. Math}, 92:1--–122, 1882.

\bibitem[Mac76]{Macintyre76}
Angus Macintyre.
\newblock On definable subsets of {$p$}-adic fields.
\newblock {\em J. Symbolic Logic}, 41(3):605--610, 1976.

\bibitem[Mil08]{Miller08}
Russell Miller.
\newblock Computable fields and {G}alois theory.
\newblock {\em Notices Amer. Math. Soc.}, 55(7):798--807, 2008.

\bibitem[MM]{MarkerMiller}
David Marker and Russell Miller.
\newblock Turing degree spectra of differentially closed fields.
\newblock preprint.

\bibitem[MOT14]{MillerOvchinnikovTrushin14}
Russell Miller, Alexey Ovchinnikov, and Dmitry Trushin.
\newblock Computing constraint sets for differential fields.
\newblock {\em J. Algebra}, 407:316--357, 2014.

\bibitem[Mou09]{Mourgues09}
Marie-H{\'e}l{\`e}ne Mourgues.
\newblock Cell decomposition for {$P$}-minimal fields.
\newblock {\em MLQ Math. Log. Q.}, 55(5):487--492, 2009.

\bibitem[Nur74]{Nurtazin74b}
A.T. Nurtazin.
\newblock {\em Computable classes and algebraic criteria of autostability}.
\newblock PhD thesis, Novosibirsk, 1974.

\bibitem[Oca14]{Ocasio}
Victor~A. Ocasio.
\newblock {\em Computability in the class of {R}eal {C}losed {F}ields}.
\newblock ProQuest LLC, Ann Arbor, MI, 2014.
\newblock Thesis (Ph.D.)--University of Notre Dame.

\bibitem[PR84]{PrestelRoquette84}
Alexander Prestel and Peter Roquette.
\newblock {\em Formally {$p$}-adic fields}, volume 1050 of {\em Lecture Notes
  in Mathematics}.
\newblock Springer-Verlag, Berlin, 1984.

\bibitem[Rab60]{Rabin60}
Michael~O. Rabin.
\newblock Computable algebra, general theory and theory of computable fields.
\newblock {\em Trans. Amer. Math. Soc.}, 95:341--360, 1960.

\bibitem[Smi81]{Smith}
Rick~L. Smith.
\newblock Effective valuation theory.
\newblock In {\em Aspects of effective algebra ({C}layton, 1979)}, pages
  232--245. Upside Down A Book Co., Yarra Glen, Vic., 1981.

\bibitem[SvdD88]{ScowcroftvandenDries88}
Philip Scowcroft and Lou van~den Dries.
\newblock On the structure of semialgebraic sets over {$p$}-adic fields.
\newblock {\em J. Symbolic Logic}, 53(4):1138--1164, 1988.

\bibitem[vdD84]{vandenDries84}
Lou van~den Dries.
\newblock Algebraic theories with definable {S}kolem functions.
\newblock {\em J. Symbolic Logic}, 49(2):625--629, 1984.

\bibitem[vdW70]{vanderWaerden70}
Bartel~L. van~der Waerden.
\newblock {\em Algebra. {V}ol 1}.
\newblock Translated by Fred Blum and John R. Schulenberger. Frederick Ungar
  Publishing Co., New York, 1970.

\end{thebibliography}
\bibliographystyle{alpha}

\end{document}